\titleformat{\section}{\normalfont\Large\bfseries}{\S\thesection}{1em}{}
\newcommand{\shrinkmargins}[1]{
  \addtolength{\textheight}{#1\topmargin}
  \addtolength{\textheight}{#1\topmargin}
  \addtolength{\textwidth}{#1\oddsidemargin}
  \addtolength{\textwidth}{#1\evensidemargin}
  \addtolength{\topmargin}{-#1\topmargin}
  \addtolength{\oddsidemargin}{-#1\oddsidemargin}
 \addtolength{\evensidemargin}{-#1\evensidemargin}
  }
\theoremstyle{plain}
\newtheorem{theorem}{Theorem}[section]
\newtheorem{corollary}[theorem]{Corollary}
\newtheorem{lemma}[theorem]{Lemma}
\newtheorem{proposition}[theorem]{Proposition}
\newtheorem{question}[theorem]{Question}
\newtheorem*{teo}{Theorem}
\newtheorem{definition}[theorem]{Definition}
\theoremstyle{remark}
\newtheorem{remark}[theorem]{Remark}
\theoremstyle{definition}
\newtheorem{example}[theorem]{Example}
\theoremstyle{fact}
\theoremstyle{claim}
\def \Z { \mathbb{Z}}
\def \Q { \mathbb{Q}}
\def \ker { \text{Ker}}
\def\Ses#1#2#3#4#5{{#1}\overset{{#2}}{\rightarrow} {#3}\overset{{#4}}{\rightarrow} {#5}}
\begin{document}

\thispagestyle{empty}
\setcounter{tocdepth}{7}

\title{The spinor genus of the integral trace.}
\author{Guillermo Mantilla-Soler}


\date{}

\maketitle

\begin{abstract}
Let $K$ be a number field of degree at least $3$. In this article we show that the genus of the integral trace form of $K$ contains only one spinor genus. Additionally we show that exactly $43\%$ (resp. $29\%$, resp. $58\%$) of quadratic (resp. real quadratic, resp. imaginary quadratic) fields have the same property.\\
\end{abstract}

 \section*{Introduction}

Let $K$ be a number field. The rational quadratic form $x \mapsto \mathrm{tr}_{K/\Q}(x^{2})$ has been extensively studied by 
several authors, see for example \cite{ba}, \cite{ba1}, \cite{conner}, \cite{epk}, \cite{galla}, \cite{Mau}, 
\cite{minreis} and \cite{Serre}. For arithmetic purposes, a finer invariant of a number field is its integral trace form i.e., the 
integral quadratic form obtained by restricting $\mathrm{tr}_{K/\Q}(x^{2})$ to the maximal order in $K$. Recent applications of 
the integral trace form can be found in the work of Bhargava and Shnidman (see \cite{bhashn}) where they count cubic orders using 
the {\it shape}, an invariant closely related to the integral trace form. Some other applications of the integral trace on cubic 
fields can be found in \cite{Manti} and \cite{Manti2}. Given a non-degenerate integral quadratic form $q$, it is of great interest to study the number of spinor classes on its 
genus. This is explained by the famous result of Eichler \cite{eichler} which says that for an indefinite form $q$ of dimension of at least $3$ the spinor genus and the isometry class coincide. In this article we analyze the spinor genus of the integral trace form of a number field.  Our main theorem is the following:


\begin{teo}[cf.Theorem \ref{principal}] 
Let $K$ be a non-quadratic number field. Then, the genus of the integral trace form of $K$ contains only one proper spinor genus.
\end{teo}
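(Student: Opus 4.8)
The plan is to translate the statement into the idelic count of proper spinor genera and then reduce it to a question about local spinor norm groups. Recall that for a nondegenerate $\Z$-lattice $L$ on a quadratic space of dimension $\geq 3$ (here the rank equals $n=[K:\Q]\geq 3$, so Eichler's machinery applies), the number of proper spinor genera in the genus of $L$ equals the index $[J_\Q : \Q^{\times}\prod_{v}\theta(O^{+}(L_v))]$, where $J_\Q$ is the idele group, $\theta$ is the spinor norm, and $v$ runs over all places with $L_\infty = L\otimes\R$. I would first isolate the clean sufficient condition: \emph{if} $\theta(O^{+}(L_p))\supseteq\Z_p^{\times}$ for every finite prime $p$, \emph{then} this index is $1$. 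Indeed, since $\Q$ has class number one, $J_\Q = \Q^{\times}\bigl(\R^{\times}\times\prod_p\Z_p^{\times}\bigr)$, so it suffices to write each unit idele as a rational times an element of $\prod_v\theta(O^{+}(L_v))$; the hypothesis places every finite component inside $\theta(O^{+}(L_p))$, while the archimedean sign is absorbed by the global unit $-1$, which is a $p$-adic unit at each finite prime and hence lies in every $\theta(O^{+}(L_p))$. Thus the whole theorem reduces to the local claim that the $p$-adic trace lattice $L_p = O_K\otimes\Z_p$ satisfies $\theta(O^{+}(L_p))\supseteq\Z_p^{\times}$ for all $p$.

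\textbf{Local decomposition and odd primes.} Next I would exploit the ring decomposition $O_K\otimes\Z_p\cong\prod_{\mathfrak p\mid p}O_{\mathfrak p}$, under which the trace form becomes an orthogonal sum $L_p\cong\perp_{\mathfrak p\mid p}L_{\mathfrak p}$ of the integral trace forms of the completions $K_{\mathfrak p}/\Qp$. For odd $p$ the group $\theta(O^{+}(L_p))$ can be read directly off the Jordan splitting of $L_p$, and it contains $\Z_p^{\times}$ as soon as the splitting has a Jordan block of rank $\geq 2$, or two rank-one blocks whose scales have equal parity and whose discriminants have nonsquare product. When $p$ is unramified every $L_{\mathfrak p}$ is unimodular, so $L_p$ is unimodular of rank $n\geq 3$ and the first condition holds. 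When $p$ ramifies, evaluating $\operatorname{tr}_{K_{\mathfrak p}/\Qp}(\pi^{i}\pi^{j})$ in a power basis shows that the pairing of $\pi^{i}$ with $\pi^{e-i}$ produces rescaled hyperbolic blocks, again of rank $2$, once the ramification index is large enough; the finitely many remaining low-ramification configurations I would settle by direct inspection of the block discriminants. This is precisely where degree $\geq 3$ enters: in the quadratic case a ramified trace form degenerates to two rank-one blocks at distinct scales with no compensating discriminant relation, its spinor norm group is then a proper subgroup of $\Z_p^{\times}(\Qp^{\times})^{2}$, and this is the source of the density statements recorded in the abstract for quadratic fields.

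\textbf{The prime $2$ and the main obstacle.} The hard part will be $p=2$, where $\Z_2^{\times}/(\Z_2^{\times})^{2}$ has order $4$, the Jordan theory carries the even/odd (type I/II) refinements, and wild ramification intervenes whenever $2\mid e$. Here the ``rank $\geq 2$ block'' heuristic must be replaced by the finer $2$-adic spinor-norm formulas of Kneser and Earnest--Hsia, and I would argue that a degree-$\geq 3$ trace lattice $L_2$ always contains an orthogonal summand whose $2$-adic spinor norm is already all of $\Z_2^{\times}$ --- for instance a unimodular block of rank $\geq 3$, a binary block equivalent to a hyperbolic plane or to $\begin{pmatrix}2&1\\1&2\end{pmatrix}$, or one of the explicitly listed wildly ramified blocks arising from the different. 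Showing that such a summand is forced by the local structure of the trace form in every ramification type (tame, wild, partially split) is the technical core of the argument; I expect to organize it as a case analysis on the multiset of local degrees $\{[K_{\mathfrak p}:\Q_2]\}$ together with their inertia, checking the spinor norm of each resulting $2$-adic Jordan form against the Earnest--Hsia tables. Once $\theta(O^{+}(L_2))\supseteq\Z_2^{\times}$ is secured, the reduction of the first paragraph yields a single proper spinor genus, completing the proof.
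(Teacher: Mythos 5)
Your first paragraph is correct and coincides with the paper's starting point: the paper invokes exactly this criterion (Cassels' corollary, stated as Theorem \ref{mainhelp}) to reduce everything to showing $\theta_{p}(\Lambda_{p}) \supseteq \Z_{p}^{*}$ for every finite prime. Your treatment of unramified and tamely ramified primes also matches the paper's Theorem \ref{TeoA}, which uses the known Jordan decomposition of the trace at such primes to exhibit a good binary or ternary orthogonal summand. The gap is in the wild cases, which is where all the real work lies. Your mechanism for wildly ramified odd primes --- ``the pairing of $\pi^{i}$ with $\pi^{e-i}$ produces rescaled hyperbolic blocks once the ramification index is large enough'' --- is a tame-ramification phenomenon: it rests on the different being exactly $(\pi^{e-1})$, which fails precisely when $p \mid e$, and no clean Jordan decomposition of the wild local trace form is known (this is why the genus-of-the-trace literature you would be drawing on, and the paper itself, only describe the trace lattice at tame primes). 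Moreover your fallback, ``the finitely many remaining low-ramification configurations,'' is not finite as stated: the theorem must hold for all degrees $n \geq 3$, and wild configurations at a fixed prime $p$ exist for infinitely many $n$, with local factors of unbounded degree.

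What actually closes these cases in the paper is a quantitative idea absent from your proposal: a two-sided squeeze on $v_{p}(D_{K})$. On one side, results of Earnest--Hsia/Cassels (Theorem \ref{Ebounds}) say that if $\theta_{p}(\Lambda_{p}) \not\supseteq \Z_{p}^{*}$ then the discriminant valuation must be huge --- at least $n(n-1)/2$ for odd $p$, and at least $n(n-3)+2\left\lfloor \frac{n+1}{2}\right\rfloor$ (definite) or $n(n-1)$ (indefinite) at $p=2$. On the other side, Serre's conductor-discriminant bounds give $v_{p}(D_{K}) \leq n(M_{p}+1)-F_{p}$ with $p^{M_{p}} \leq n$. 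Comparing the two kills every wild prime $p \geq 5$ outright, leaves only $n \in \{3,4\}$ at $p=3$ (settled by forcing two diagonal entries of equal valuation, hence a summand $\langle 3^{a},3^{a}u\rangle$), and leaves only totally real quartic fields wild at $2$ with $v_{2}(D_{K}) \in \{9,10,11\}$; for these the paper enumerates the $36$ totally ramified quartic extensions of $\Q_{2}$ and computes their Gram matrices explicitly. Without this squeeze, your proposed case analysis ``on the multiset of local degrees together with their inertia'' has no mechanism to terminate, and the central claim --- that a degree-$\geq 3$ wild trace lattice always contains an orthogonal summand with full spinor norm --- is left as an assertion rather than proved. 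So the skeleton of your argument is the right one, but the step you yourself call the technical core is missing, and the tool the paper uses to supply it (discriminant bounds in both directions) does not appear in your plan.
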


In the case of quadratic fields it is not necessary true that the genus and the proper spinor genus of the integral trace coincide, however they agree and differ for infinitely many quadratic fields. In fact, they coincide (resp. differ) for a positive proportion of quadratic fields. 

\begin{teo}[cf.Theorem \ref{teoremacaso2}] 
For $43\%$ (resp. $29\%$, resp. $58\%$) of quadratic (resp. real quadratic, resp. imaginary quadratic) fields the genus and proper spinor genus of the integral trace coincide.
\end{teo}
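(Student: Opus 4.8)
The plan is to treat the two cases according to $d \bmod 4$ and the sign of $d$, realize the integral trace as an explicit binary $\Z$-lattice $T_K$, reduce the coincidence of genus and proper spinor genus to the vanishing of the corank of an explicit $\mathbb{F}_2$-matrix, and finally compute the full-rank density of that matrix. Writing $K=\Q(\sqrt d)$ with $d$ squarefree, a direct computation gives $T_K\cong\langle 2,2d\rangle$ when $d\equiv 2,3\pmod 4$ and $T_K\cong\left(\begin{smallmatrix}2&1\\1&(d+1)/2\end{smallmatrix}\right)$ when $d\equiv1\pmod 4$; in both cases $\det T_K=\mathrm{disc}(K)=:D$. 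Crucially, $T_K$ is positive definite when $K$ is real and has signature $(1,1)$ when $K$ is imaginary, since $\tr_{K/\Q}(x^2)=\sigma_1(x)^2+\sigma_2(x)^2$ is a sum of squares in the real case and equals $2\,\mathrm{Re}(\sigma_1(x)^2)$ in the imaginary case. By the idelic formula the number of proper spinor genera in the genus of $T_K$ is $[J_\Q:\Q^\ast J_{T_K}]$ with $J_{T_K}=\prod_v\theta\big(O^+(T_K\otimes\Z_v)\big)$, so the statement to prove is that this index equals $1$ for a positive proportion of each family.

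First I would compute the local spinor-norm groups $\Theta_v:=\theta(O^+(T_K\otimes\Z_v))\subseteq\Q_v^\ast/(\Q_v^\ast)^2$. At $p\nmid 2D$ the lattice is unimodular and $\Theta_p=\Z_p^\ast(\Q_p^\ast)^2$. At odd $p\mid D$ the $p$-adic Jordan splitting has the shape $\langle u\rangle\perp\langle u'p\rangle$, whence $-\mathrm{id}$ has spinor norm equal to the determinant and $\Theta_p=\{1,D\}(\Q_p^\ast)^2$. At the archimedean place $\Theta_\infty=\R_{>0}$ (definite, real $K$) or $\Theta_\infty=\R^\ast$ (indefinite, imaginary $K$). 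The remaining input, and the principal local obstacle, is $\Theta_2$: this requires the $2$-adic Jordan decomposition of $T_K$ together with O'Meara-style tables for the $2$-adic spinor norm, and the answer depends on $d\bmod 8$ and on whether $2\mid D$. These computations I expect to be routine but delicate, and they are precisely what pins down the matrix below.

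Next I would translate $[J_\Q:\Q^\ast J_{T_K}]=1$ into a rank condition. By class field theory this index equals $[E:\Q]$, where $E$ is the compositum of the fields $\Q(\sqrt m)$, $m$ squarefree, with $(m,t)_v=1$ for all $v$ and all $t\in\Theta_v$; since $\Theta_p\supseteq\Z_p^\ast(\Q_p^\ast)^2$ at good $p$, any such $m$ is supported on the primes dividing $2D$, so $m$ ranges over the fixed $\mathbb{F}_2$-space $\Delta=\langle -1,2,p_1,\dots,p_k\rangle$, where $p_1,\dots,p_k$ are the odd prime divisors of $D$. The conditions at the odd $p_i$ read $(m,D)_{p_i}=1$ and, after expanding the Hilbert symbols into Legendre symbols $\left(\tfrac{p_j}{p_i}\right)$, assemble into a R\'edei-type matrix $M(D)$ over $\mathbb{F}_2$, while the conditions at $2$ and at $\infty$ contribute the extra rows prescribed by $\Theta_2$ and $\Theta_\infty$. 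Thus genus and proper spinor genus coincide if and only if $M(D)$ has full column rank, i.e. $E=\Q$. The archimedean place enters here: for imaginary $K$ the place $\infty$ contributes one more independent row than for real $K$, so $M(D)$ is essentially square in the real case and carries one surplus row in the imaginary case.

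Finally I would compute the density of fundamental discriminants for which $M(D)$ has full rank, averaging over the residue classes of $d$ modulo $8$ (which fix the shape of $M(D)$ through $\Theta_2$ and $\Theta_\infty$, and among squarefree $d$ are equidistributed). The entries of $M(D)$ are quadratic-residue symbols among the primes dividing $D$, and the equidistribution that makes these symbol matrices behave like random $\mathbb{F}_2$-matrices of the appropriate symmetry type is exactly R\'edei's theory together with the Fouvry--Kl\"uners equidistribution results for the $4$-rank. Since a random square matrix over $\mathbb{F}_2$ is invertible with limiting probability $c:=\prod_{k\ge 1}(1-2^{-k})\approx 0.2888$, whereas a matrix with one surplus row attains full rank with limiting probability $c/(1-2^{-1})=2c\approx 0.5776$, I obtain density $c\approx 29\%$ for real quadratic fields and $2c\approx 58\%$ for imaginary quadratic fields; averaging over the two families (each of density $\tfrac12$ among all quadratic fields) yields $\tfrac32 c\approx 43\%$. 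The two genuine difficulties are therefore the exact $2$-adic spinor-norm computation that fixes $M(D)$, and the justification of the equidistribution needed to evaluate the full-rank density.
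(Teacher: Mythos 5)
Your overall architecture runs parallel to the paper's: realize the trace form as a binary form of discriminant $-4d$, convert ``genus $=$ proper spinor genus'' into a $4$-rank/R\'edei-type condition, and feed this into the Fouvry--Kl\"uners densities; the numerical endgame ($\phi(1/2)$, $2\phi(1/2)$, and their average) is the same. But there are two genuine gaps, one in each half of your reduction.

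First, your idelic criterion is the one valid in rank $\geq 3$, and it fails for binary forms. For a binary space $V$ of discriminant $\Delta$ the spinor norm is not surjective: $\theta\bigl({\rm SO}(V_v)\bigr)=N_{E_v/\Q_v}(E_v^\ast)$ where $E=\Q(\sqrt{\Delta})$, so every local group $\theta\bigl({\rm SO}(T_K\otimes\Z_v)\bigr)$ lies inside the local norm group of $E$. Consequently the quadratic character attached to $m=\Delta$ (for the trace form, $m=-d$ up to squares) is trivial on \emph{all} of your local conditions: the class of the discriminant always lies in the kernel of $M(D)$. Your criterion ``genus and proper spinor genus coincide iff $M(D)$ has full column rank'' is therefore never satisfied, and as written your argument outputs density $0$. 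The correct criterion is that the kernel of $M(D)$ is exactly the line spanned by $\Delta$; equivalently, the index must be computed inside the adelic norm group of $E$ rather than inside $J_\Q$. This correction is precisely what the paper's quoted theorem of Estes and Pall (Theorem \ref{estespall}) packages: the proper spinor genera in a genus form the group $C^{2}_{\Delta}/C^{4}_{\Delta}$, so coincidence holds iff ${\rm rk}_{4}(C_{\Delta})=0$ (Proposition \ref{4ranktheorem}). (With the corrected criterion your row/column bookkeeping still comes out right---an effectively square matrix for real $K$, one surplus row for imaginary $K$---so the final constants survive, but the fix is not optional.)

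Second, the density step. Fouvry--Kl\"uners (Theorem \ref{FK}) computes the density of \emph{fundamental} discriminants $D$ with ${\rm rk}_{4}(C_{D})=0$ in the classes $D\equiv 12, 8 \bmod{16}$ and $D\equiv 1 \bmod 4$. Your matrices are attached to $D=-4d$, and when $d\equiv 3 \pmod 4$---a full third of all squarefree $d$---one has $-4d\equiv 4 \bmod{16}$, which is never a fundamental discriminant: it is the discriminant of the order of conductor $2$ in $\Q(\sqrt{-d})$, and no statement of Fouvry--Kl\"uners applies to it; averaging over residue classes of $d$ modulo $8$ does not remove this case. The paper closes exactly this gap with Lemma \ref{D}, Proposition \ref{D2} and Lemma \ref{3mod4disc}, proving ${\rm rk}_{4}(C_{-4d})={\rm rk}_{4}(C_{-d})$ by comparing the narrow Picard group of the conductor-$2$ order with the narrow class group of the maximal order; this rests on the unit index $\#(O_{K}^{\ast}/O^{\ast})$ being odd together with a snake-lemma argument, and is a genuine step, not bookkeeping. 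Your proposal needs either this reduction or a re-derivation of the equidistribution result for the non-maximal-order family; without one of the two, the claimed densities are unjustified on a positive proportion of the fields being counted.
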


\section{Definitions, notations and basic facts}\label{prelim}

\subsection{Notation}

We summarize here the most important notation used in the paper.

\begin{itemize}

\item For $a \in \Q_{p}$ we have that $v_{p}(a)$ is the usual $p$-adic valuation. We will use $\mathbb{A}$ to denote the adele ring over $\Q$, and for a field $K$ we will denote by $G_{K}$ the absolute Galois group ${\rm Gal}(K^{sep}/K)$.  

\item Let $a_{1},..., a_{n}$ be elements of a ring $R$, which in practice will be a maximal order on a number field or a local field, the $R$-isometry class of a quadratic form $a_{1}x_{1}^{2}+...+a_{1}x_{1}^{2}$ will be denoted by $\langle a_1,...,a_n\rangle$. Whenever there is a possible ambiguity in the ring of definition of an isometry between two quadratic forms we will write $\cong_{R}$ to make it clear that the forms are considered to be over $R.$

\item The isometry class of the binary integral quadratic form $2xy$ over $\Z_{2}$,  the {\it hyperbolic plane},  will be denoted by $\mathbb{H}$.

\item Most of our notation for quadratic forms is adopted from \cite{cassels} or \cite{Om}. Unexplained terminology is either standard or can be found in either of the aforementioned references.  
 
\end{itemize}
 
\subsection{Equivalences between integral quadratic forms} We briefly recall the standard notions of equivalence between integral quadratic forms. 
As it is customary we use geometric language. Let $V$ be a non-degenerate quadratic space over $\Q$ with orthogonal (resp. special orthogonal)
group ${\rm O}(V)$ (resp. ${\rm SO}(V)$) and let $\mathcal{L}_{V}$ be the 
set of lattices of maximal rank inside $V$. Let ${\rm O}_{\mathbb{A}}(V)$ (resp. ${\rm SO}_{\mathbb{A}}(V)$) be the adelic orthogonal (resp. special orthogonal) 
group. The group ${\rm O}_{\mathbb{A}}(V)$ acts on $\mathcal{L}_{V}$ and the different notions of equivalence between 
lattices are just given by orbits in $\mathcal{L}_{V}$, under the restricted action, of certain subgroups of ${\rm O}_{\mathbb{A}}(V)$.  By using the usual diagonal embedding we may 
view ${\rm O}(V)$ as a subgroup of ${\rm O}_{\mathbb{A}}(V)$. The notions of class, proper class and genus can all be given in terms of the groups 
${\rm O} (V)$, ${\rm SO}(V)$ and ${\rm SO}_{\mathbb{A}}(V)$, however for the spinor genus we need the group $\Theta_{\mathbb{A}}(V)$ which we define next. 
Let ${\rm {\bf Spin}}(V)$ be the simply connected linear algebraic group over $\Q$ that is the universal cover of the algebraic group 
${\rm \bf {SO}}(V)$. Explicitly, there is an exact sequence of algebraic groups
\[1 \rightarrow \Ses{{\bf \mu_{2}}}{ }{{\rm \bf {Spin}}(V)}{ }{{\rm {\bf SO}}(V)} \rightarrow 1.\] By taking $G_{\Q}$-invariance in the geometric sequence induced by the above exact sequence 
(see \cite[Proposition 22.15]{invol}), we get a connecting homomorphism of Galois cohomology
\[{\rm SO}(V) \rightarrow  \rm{H}^{1}(\Q,{\bf \mu_{2}}) \cong \Q^{*}/ (\Q^{*})^{2}.\] The obtained homomorphism 
$\theta: {\rm SO}(V) \to \Q^{*}/ (\Q^{*})^{2}$ is the so called  {\it spinor norm.} By considering, at every prime $p$, 
the quadratic space $V_{p}:=V\otimes_{\Q}\Q_{p}$ we have, as above, the local spinor norm 
\[\theta_{p}: {\rm SO}(V_{p}) \to \Q_{p}^{*}/ (\Q_{p}^{*})^{2}.\]  Putting all the local spinor norms together we have a group 
homomorphism\footnote{The image of $\Theta_{\mathbb{A}}$ is in fact contained in 
$\mathbb{A^{*}}/\mathbb{A^{*}}^{2}.$} \[ \theta_{\mathbb{A}} : {\rm SO}_{\mathbb{A}}(V) \to \prod_{p} \Q_{p}^{*}/ (\Q_{p}^{*})^{2}\] with 
kernel denoted by $\Theta_{\mathbb{A}}(V):= \ker(\theta_{\mathbb{A}})$.

\begin{definition}

Let $\Lambda$ be a lattice in $\mathcal{L}_{V}$.  For a subgroup $G$ of ${\rm O}_{\mathbb{A}}(V)$ the orbit of $\Lambda$ under the action of 
$G$ is denoted by $G(\Lambda)$. 
\begin{enumerate}
\item The class of $\Lambda$ (resp proper class) is ${\rm cl}(\Lambda):= {\rm O}(V)(\Lambda)$ (resp ${\rm cl}^{+}(\Lambda):= {\rm SO}(V)(\Lambda)$).
\item The genus of $\Lambda$ (resp proper genus) is ${\rm gen}(\Lambda):= {\rm O}_{\mathbb{A}}(V)(\Lambda)$ (resp ${\rm gen}^{+}(\Lambda):= {\rm SO}_{\mathbb{A}}(V)(\Lambda)$). 
\item The spinor genus of $\Lambda$ (resp proper spinor genus) is ${\rm spin}(\Lambda):= {\rm O}(V)\Theta_{\mathbb{A}}(V)(\Lambda)$ 
(resp ${\rm spin}^{+}(\Lambda):= {\rm SO}(V)\Theta_{\mathbb{A}}(V)(\Lambda)$).
\end{enumerate} Given $M \in \mathcal{L}_{V}$ we say that $\Lambda$ and $M$ are in the same class (resp. proper class, genus, proper genus, spinor genus, proper spinor genus) if they belong to the same orbit by the respective subgroup.
\end{definition}

It is not difficult to see that ${\rm gen}(\Lambda)={\rm gen}^{+}(\Lambda)$ for any lattice $\Lambda$ (see \cite[Chapter 8 Corollary to Lemma 3.2]{cassels}) so the following are immediate consequences of the definitions:

\[\begin{matrix} {\rm cl}(\Lambda) & \subseteq & {\rm spin}(\Lambda) & \subseteq & {\rm gen}(\Lambda) \\ \rotatebox[origin=c]{270}{ $\supseteq$ }  & & \rotatebox[origin=c]{270}{ $\supseteq$ } & & \rotatebox[origin=c]{270}{ $=$ }\\  {\rm cl}^{+}(\Lambda) & \subseteq & {\rm spin}^{+}(\Lambda) & \subseteq & {\rm gen}^{+}(\Lambda). \end{matrix}\]

The relevance of spinor equivalence, see \cite{eichler}, comes from the fact that for $\Lambda$ indefinite and of dimension bigger than $2$ we have that \[{\rm cl}^{+}(\Lambda)= {\rm spin}^{+}(\Lambda) \quad {\rm and} \quad {\rm cl}(\Lambda)= {\rm spin}(\Lambda).\]

The following criterion is a very useful tool to determine when an integral lattice $\Lambda$ has ${\rm spin}^{+}(\Lambda) = {\rm spin}(\Lambda) = {\rm gen}(\Lambda).$

\begin{theorem}\label{mainhelp}

Let $V$ be a non-degenerate quadratic space over $\Q$, and let $\Lambda$ be an integral lattice in $V$. For a prime $p$ let $\Lambda_{p}:=\Lambda \otimes \Z_{p}$, and let $\theta_{p}$ be the spinor norm on the $\Q_{p}$-quadratic space $V_{p}:=V \otimes_{\Q} \Q_{p}$. Suppose that for all primes $p$ we have that\footnote{See \cite[Chapter 11 \S1 (1.20) and (1.21)]{cassels} for the definition of $\theta_{p}(\Lambda_{p})$.} $\theta_{p}(\Lambda_{p}) \supseteq \Z_{p}^{*}.$ Then, the genus of $\Lambda$ contains only one proper\footnote{What Cassels calls spinor genus, \cite[Chapter 11 \S1]{cassels}, is what is usually called proper spinor genus \cite[Chapter X \S102]{Om}.} spinor genus.

\end{theorem}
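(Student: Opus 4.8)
The plan is to translate the statement into the computation of an index of subgroups of the idele group and then to exploit the fact that $\Q$ has class number one. The engine is the adelic spinor norm $\theta_{\mathbb{A}}$, whose kernel is $\Theta_{\mathbb{A}}(V)$. Starting from the orbit description $\mathrm{spin}^{+}(\Lambda)=\mathrm{SO}(V)\Theta_{\mathbb{A}}(V)(\Lambda)$, the proper spinor genera inside $\mathrm{gen}(\Lambda)$ are the double cosets $\mathrm{SO}(V)\Theta_{\mathbb{A}}(V)\backslash \mathrm{SO}_{\mathbb{A}}(V)/\mathrm{Stab}(\Lambda)$, where $\mathrm{Stab}(\Lambda)=\prod_{p}\mathrm{SO}(\Lambda_{p})$. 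Applying the homomorphism $\theta_{\mathbb{A}}$, and using that its kernel is contained in the left-hand factor, carries this double coset space bijectively onto a quotient of abelian groups. Concretely, writing $J_{\Lambda}:=\theta_{\infty}(\mathrm{SO}(V_{\infty}))\times\prod_{p}\theta_{p}(\Lambda_{p})\subseteq\mathbb{A}^{*}$, one obtains the standard identification (see \cite[Chapter 11]{cassels} and \cite[\S 102]{Om})
\[ \#\{\text{proper spinor genera in }\mathrm{gen}(\Lambda)\}=[\mathbb{A}^{*}:\Q^{*}\,J_{\Lambda}]. \]
Thus it suffices to prove that $\Q^{*}J_{\Lambda}=\mathbb{A}^{*}$.

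Next I would feed in the hypothesis. By assumption $\theta_{p}(\Lambda_{p})\supseteq\Z_{p}^{*}$ for every prime $p$, so $\prod_{p}\theta_{p}(\Lambda_{p})$ contains the full group $\prod_{p}\Z_{p}^{*}$ of finite unit ideles; since the archimedean factor $\theta_{\infty}(\mathrm{SO}(V_{\infty}))$ always contains $\R_{>0}$, the subgroup $J_{\Lambda}$ contains $\R_{>0}\times\prod_{p}\Z_{p}^{*}$. Finally I would invoke the triviality of the class group of $\Q$ in its idelic form, namely $\mathbb{A}^{*}=\Q^{*}\cdot\bigl(\R_{>0}\times\prod_{p}\Z_{p}^{*}\bigr)$: every idele is a principal idele times a connected-unit idele. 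Combined with the previous step this yields $\mathbb{A}^{*}=\Q^{*}J_{\Lambda}$, so the index above equals $1$ and $\mathrm{gen}(\Lambda)$ consists of a single proper spinor genus.

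The one genuinely delicate point, and the step I would lean on \cite[Chapter 11]{cassels} for rather than reprove, is the index formula of the first paragraph. Its proof requires the surjectivity of the local spinor norm $\theta_{p}$ at all but finitely many primes together with strong approximation for the spin group (Eichler--Kneser); these are exactly what permit the global factor to be taken to be all of $\Q^{*}$, instead of the a priori smaller image $\theta(\mathrm{SO}(V))$ of the global spinor norm, and what guarantee that $J_{\Lambda}$ is an open subgroup, so that the index is finite. Once this formula is granted, the remainder of the argument is immediate from $h_{\Q}=1$.
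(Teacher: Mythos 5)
Your proof is correct and takes essentially the same route as the paper: the paper's entire proof is the citation to the Corollary on p.~213 of Cassels, and what you have written is precisely the argument behind that corollary --- the double-coset/index identification of the proper spinor genera in a genus (Cassels Ch.~11, equivalently O'Meara \S 102) combined with the idelic form of $h_{\Q}=1$, namely $\mathbb{A}^{*}=\Q^{*}\cdot\bigl(\R_{>0}\times\prod_{p}\Z_{p}^{*}\bigr)$. The delicate point you flag, replacing the a priori groups $\theta(\mathrm{SO}(V))$ and $\theta_{\mathbb{A}}(\mathrm{SO}_{\mathbb{A}}(V))$ by $\Q^{*}$ and $\mathbb{A}^{*}$, is handled correctly, since in the definite case the two index-$2$ discrepancies at the archimedean place (in the ambient group and in the subgroup $\Q^{*}J_{\Lambda}$) cancel, so your index formula agrees with the standard one.
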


\begin{proof}

See \cite[\S11, Corollary pg 213]{cassels}. 
 
\end{proof}

\subsection{Outline of the proof}

Let $K$ be a number field with maximal order $O_{K}$  and let $q_{K}$ be its integral trace form i.e., the integral quadratic form obtained by restricting 
$x \mapsto \mathrm{tr}_{K/\Q}(x^{2})$ to $O_{K}$. The maximal order $O_{K}$  is an integral lattice, with integral quadratic $q_{K}$, 
in the quadratic space $(K, \rm{tr}_{K/\Q}(x^2))$. We will denote this lattice and quadratic space by $\Lambda$ and $V$, respectively. To obtain that 
${\rm spin}^{+}(\Lambda) = {\rm gen}(\Lambda)$ it is enough to show, thanks to Theorem \ref{mainhelp}, that for all prime $p$  
\begin{equation}\label{contiene}
\theta_{p}(\Lambda_{p}) \supseteq \Z_{p}^{*} \bmod (\Q_{p}^{*})^2.
\end{equation}

For primes $p$ that are at worst tamely ramified in $K$ there is a Jordan decomposition of the lattice $\Lambda_{p}$ that allows us to verify (\ref{contiene}); see Theorem \ref{TeoA} for details. \\

To obtain (\ref{contiene}) for wild primes we use the following result together with standard upper bounds on the $p$-adic valuations of the discriminant of a number field (see Corollary \ref{property}).

\begin{theorem}\label{Ebounds}
 Let $V$ be a non-degenerate rational quadratic space of dimension $n >2$. Let $\Lambda$ be a full rank integral lattice of discriminant $D$, 
and suppose that $p$ is a prime such that $\theta_{p}(\Lambda_{p}) \not\supseteq \Z_{p}^{*} \bmod (\Q_{p}^{*})^2$. Then, 
  
  \begin{enumerate}
  
  \item $n(n-1)/2 \leq v_{p}(D)$ if $p$ is odd. 
  
  \item $n(n-3) + 2\left \lfloor \frac{n+1}{2} \right \rfloor \leq v_{2}(D)$ if $V$ is definite.
  
  \item $n(n-1) \leq v_{2}(D)$ if $V$ is indefinite.
  
  \end{enumerate}
  
\end{theorem}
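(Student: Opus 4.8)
The plan is to make everything local at $p$ and to read $v_p(D)$ directly off a Jordan splitting of $\Lambda_p$. Over $\Z_p$ write $\Lambda_p = \perp_{i \ge 0} L_i$ with $L_i$ a $p^i$-modular lattice of rank $n_i$, so that $\sum_i n_i = n$ and, since $\det L_i = p^{i n_i}\cdot(\text{unit})$, one has $v_p(D) = v_p(\det \Lambda_p) = \sum_i i\,n_i$. The spinor norm $\theta_p(\Lambda_p)$ is computed from this Jordan data by the classical results of Kneser (as in \cite[Ch. 11]{cassels}) for $p$ odd and of Earnest--Hsia for $p = 2$. So in each case I would turn the hypothesis $\theta_p(\Lambda_p) \not\supseteq \Z_p^{*}$ into a structural restriction forcing the occupied scales to be few and spread out, and then minimise the weighted sum $\sum_i i\,n_i$ over the admissible profiles.

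For $p$ odd the argument is short. The key local fact is that a single Jordan constituent $L_i$ of rank $n_i \ge 2$ already yields $\theta_p(\Lambda_p) \supseteq \Z_p^{*}$: inside such a component the represented values exhaust both unit square classes times $p^i$ (a nondegenerate binary form over $\mathbb{F}_p$ is universal on $\mathbb{F}_p^{*}$, and one lifts by Hensel), and the product of the reflections in two vectors lying in the two distinct classes is a proper isometry of $\Lambda_p$ whose spinor norm is the nontrivial unit class. Contrapositively, $\theta_p(\Lambda_p) \not\supseteq \Z_p^{*}$ forces every $n_i \le 1$; the $n$ occupied scales are then distinct nonnegative integers $i_1 < \dots < i_n$, whence $v_p(D) = \sum_k i_k \ge 0 + 1 + \dots + (n-1) = n(n-1)/2$. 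This proves (1).

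The prime $2$ is the crux, and where I expect the real work. A rank-$2$ modular component no longer forces the full unit group into the spinor norm, so the configurations compatible with $\theta_2(\Lambda_2) \not\supseteq \Z_2^{*}$ are considerably richer, and one must use the full Earnest--Hsia description of $\theta_2(\Lambda_2)$ in terms of the ranks, scales, unit determinants and oddities of the $L_i$. From that description I would extract that failure of $\theta_2(\Lambda_2) \supseteq \Z_2^{*}$ forces the constituents to have small rank and to sit at non-adjacent scales; in the cleanest (indefinite) regime this reduces to ``all $n_i \le 1$ with gaps at least $2$'', whose cheapest profile is the set of scales $0,2,4,\dots,2(n-1)$, giving $v_2(D) \ge n(n-1)$ and hence (3).

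Finally, case (2) is weaker precisely because definiteness permits additional anisotropic rank-$2$ constituents to survive at low scales, which can pack two dimensions into a single small scale and thereby lower the attainable weighted sum. The point at which the global hypothesis enters is the sign of $D$: each such anisotropic binary piece carries Hasse invariant $-1$ at $2$, and the signature of a definite space pins down $c_\infty$, so Hilbert reciprocity constrains how many of them, and at which scales, the failing lattice may carry. Re-running the minimisation of $\sum_i i\,n_i$ under these constraints returns the extremal profile $0,1,3,5,\dots,2n-3$ of weighted sum $(n-1)^2$ when $n$ is odd, and its analogue of weighted sum $n(n-2)$ when $n$ is even; both are recorded uniformly as $n(n-3) + 2\lfloor (n+1)/2\rfloor$. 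The main obstacle throughout is thus the $2$-adic spinor-norm calculation together with this constrained optimisation and its interaction with the global signature; the remaining steps are bookkeeping on valuations.
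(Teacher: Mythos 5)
Your treatment of part (1) is correct, and it is exactly the argument that the cited literature (and the paper's own Remark after this theorem) has in mind: at an odd prime, a Jordan constituent of rank at least $2$ forces $\Z_{p}^{*}\subseteq\theta_{p}(\Lambda_{p})$, so failure forces $n$ distinct scales and $v_{p}(D)\ge 0+1+\cdots+(n-1)$. Keep in mind, however, that the paper itself contains no proof of this theorem at all: its ``proof'' is a pointer to the two Corollaries on p.~214 of Cassels and to the proofs of Theorems 4.2 and 4.6 of Earnest--Hsia. So for parts (2) and (3) the comparison is with those sources, and there your outline has a genuine gap, not merely deferred bookkeeping.

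The gap is structural. The hypothesis $\theta_{2}(\Lambda_{2})\not\supseteq\Z_{2}^{*}$ is purely local at $2$, while definiteness is a condition at the real place, and every $\Z_{2}$-lattice arises as the completion of both definite and indefinite global lattices (prescribe the completion at $2$ and the signature, repair Hilbert reciprocity at an auxiliary odd prime---which does not change $v_{2}(D)$---and glue local lattices). Hence no purely local analysis can output the two different bounds of (2) and (3), and indeed your structural claim for (3) is false: for $L=\langle 1,1\rangle\perp\langle 2^{4}\cdot 5\rangle$ one checks that every proper isometry has spinor norm in $\{1,2,5,10\}(\Q_{2}^{*})^{2}$, so $\theta(O^{+}(L))\not\supseteq\Z_{2}^{*}$, yet $v_{2}(\det L)=4<6=n(n-1)$ and $L$ is the $2$-adic completion of indefinite ternary lattices; failure therefore does not force rank-one constituents with gaps $\ge 2$. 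Your proposed mechanisms for injecting the global hypothesis also do not work: reciprocity can always be restored away from $2$, and the scaled even anisotropic binary lattice with Gram matrix $2^{e}\begin{pmatrix}2&1\\1&2\end{pmatrix}$ in fact has $\theta\supseteq\Z_{2}^{*}$ (its form is $2^{e+1}$ times the unramified norm form, so it represents $2^{e+1}\Z_{2}^{*}$), so such constituents never occur in a failing lattice at all. What Earnest--Hsia actually prove---and the only form in which the definite/indefinite dichotomy is meaningful---is a statement about the number of proper spinor genera in the genus of a \emph{global} lattice: indefiniteness enters through $\theta(\mathrm{SO}(V_{\infty}))=\R^{*}$ (versus $\R_{>0}$ in the definite case) in the idelic index that counts proper spinor genera, so for indefinite lattices the obstruction that must fail is the strictly stronger condition $\{\pm 1\}\cdot\theta_{2}(\Lambda_{2})\not\supseteq\Z_{2}^{*}$, and it is that condition which forces $v_{2}(D)\ge n(n-1)$. (The example above is consistent with this: $\{\pm1\}\cdot\{1,2,5,10\}\supseteq\Z_{2}^{*}$, and the genus of $\langle 1,1,80\rangle$ has two proper spinor genera in the definite realization but one in the indefinite one; this also shows the statement you were asked to prove is really only correct in this global reading, a point on which the paper's own formulation is loose.) So to complete (2)--(3) you must pass to the global formulation (or strengthen the local hypothesis accordingly); the constrained minimisation you describe, even executed with the full Earnest--Hsia tables, cannot close this gap---and, as a smaller point, your extremal profiles are off as well: for $n=3$ the definite bound $4$ is attained by scales $(0,0,4)$ as above, whereas your profile $(0,1,3)$ yields $\theta_{2}\supseteq\Z_{2}^{*}$ for every choice of units.
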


\begin{proof}

See both Corollaries in page 214 of \cite{cassels} and the proofs of \cite[Theorem 4.2, Theorem 4.6]{ernts}.
 
\end{proof}

\begin{remark}
It is worthwhile to point out that the initial step in the strategy to show (\ref{contiene}) is the same for every prime regardless of the ramification type. We explain this by looking at the argument for an odd prime $p$. The common strategy here is to use a diagonalization $\langle \alpha_{1},...,\alpha_{n} \rangle$ of  the $\Z_{p}$-lattice $\Lambda_{p}$ and obtain (\ref{contiene}) from information about the $\alpha_{i}$'s. In the case of at worst tame ramification we know explicitly the values of the $\alpha_{i}$'s and from them we get (\ref{contiene}) (see Theorem \ref{TeoA}). On the other hand, if the prime $p$ is wild we can assume that $v_{p}(\alpha_{i})\neq v_{p}(\alpha_{j})$ for $i \neq j$, otherwise $\Lambda_{p}$ contains an orthogonal factor $\Lambda^{(p)}$ with $\theta_{p}(\Lambda_{p}) \supseteq \theta_{p}(\Lambda^{p})  \supseteq \Z_{p}^{*} \bmod (\Q_{p}^{*})^2$. In particular, we have that \[v_{p}(\alpha_{1}\cdot...\cdot\alpha_{n}) = \sum_{i=0}^{n}v_{p}(\alpha_{i}) \ge n(n-1)/2.\] This is precisely the idea behind Theorem \ref{Ebounds}.1, hence to obtain (\ref{contiene}) we are only left with the task to show that this inequality never occurs for $\langle \alpha_{1},...,\alpha_{n} \rangle$. 
\end{remark}

\section{The proof}  

As often happens, it is convenient to start with number fields with only tame ramification. Afterwards we will proceed to the case of wild ramification. Throughout this section all number fields are assumed to have degree bigger than $2$.

\subsection{Local integral trace at a tame prime}

In this section we describe the Jordan decomposition of the integral trace form when localized at a prime with at worst tame ramification. For details and proofs see \cite{Manti3}.  Given $L$ a number field and $p$ a rational prime we denote by  $g_{p}$ the number of primes in $O_{L}$ lying over $p$. Furthermore, 
\[ F_{p}:=\sum_{i=1}^{g_{p}}f_{i}\] where $f_{1},..., f_{g_{p}}$ are the inertia degrees of the prime $p$ in $L$.
\subsubsection{Jordan decomposition}

The following result describes the Jordan decomposition of the 
localization of the integral trace in terms of residue and inertia degrees. For details see \cite{Manti3}.

\begin{theorem}\label{general}
Let $L$ be a degree $n$ number field and let $q_{L}$ be its integral trace form.  Let $p$ be a rational prime which is not wildly ramified in $L$. Then,
\[q_{L} \otimes \Z_{p} \cong 
\begin{cases}
\underbrace{\langle1,....,1,\alpha_{p} \rangle}_{F_{p}}  \bigoplus p \otimes \underbrace{\langle 1,...,1,\beta_{p} \rangle}_{n-F_{p}}  \mbox{ if $p \neq 2$}, \\
\underbrace{\langle\gamma_{1},....,\gamma_{F_{2}} \rangle}_{F_{2}}  \bigoplus 2 \otimes \underbrace{( \mathbb{H} \oplus...\oplus \mathbb{H})}_{\frac{n-F_{2}}{2}} \ \mbox{ otherwise.}\\
\end{cases}\]
Where $\alpha_{p}, \beta_{p}, \gamma_{i}$ are some elements  $\Z_{p}^{*}$ that can be explicitly calculated in terms of the ramification of the prime $p$ at $L$.

\end{theorem}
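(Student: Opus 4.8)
The plan is to reduce the global computation to a product of local ones and then compute the trace form of each tamely ramified local extension explicitly. First I would use the canonical decomposition $O_{L}\otimes_{\Z}\Z_{p}\cong \prod_{\mathfrak{p}\mid p}O_{L_{\mathfrak{p}}}$ together with the additivity $\mathrm{tr}_{L/\Q}=\sum_{\mathfrak{p}\mid p}\mathrm{tr}_{L_{\mathfrak{p}}/\Q_{p}}$ to obtain an orthogonal splitting $q_{L}\otimes\Z_{p}\cong\bigoplus_{\mathfrak{p}\mid p}q_{L_{\mathfrak{p}}}$, where $q_{L_{\mathfrak{p}}}(x)=\mathrm{tr}_{L_{\mathfrak{p}}/\Q_{p}}(x^{2})$ on $O_{L_{\mathfrak{p}}}$. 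It therefore suffices to compute each local trace form for an extension $L_{\mathfrak{p}}/\Q_{p}$ with residue degree $f$ and ramification index $e$ with $p\nmid e$, and then sum, keeping track of $\sum_{\mathfrak{p}}f_{\mathfrak{p}}=F_{p}$ and $\sum_{\mathfrak{p}}f_{\mathfrak{p}}(e_{\mathfrak{p}}-1)=n-F_{p}$.

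For a single such extension I would factor through the maximal unramified subextension $M/\Q_{p}$ of degree $f$, so that $L_{\mathfrak{p}}/M$ is totally tamely ramified of degree $e$ with $O_{L_{\mathfrak{p}}}=O_{M}[\pi]$ and minimal polynomial $X^{e}-p\zeta$ for some $\zeta\in O_{M}^{*}$ (since $M/\Q_{p}$ is unramified, a uniformizer of $M$ is $p$ times a unit). On the $O_{M}$-basis $1,\pi,\dots,\pi^{e-1}$ the values $\mathrm{tr}_{L_{\mathfrak{p}}/M}(\pi^{i+j})$ are the power sums of $X^{e}-p\zeta$, which vanish unless $e\mid i+j$; they equal $e$ when $i=j=0$ and $ep\zeta$ when $i+j=e$. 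Writing $x=\sum_{i=0}^{e-1}a_{i}\pi^{i}$ and composing with $\mathrm{tr}_{M/\Q_{p}}$ gives
\[ \mathrm{tr}_{L_{\mathfrak{p}}/\Q_{p}}(x^{2})=e\,\mathrm{tr}_{M/\Q_{p}}(a_{0}^{2})+ep\,\mathrm{tr}_{M/\Q_{p}}\Bigl(\zeta\sum_{i=1}^{e-1}a_{i}a_{e-i}\Bigr). \]
This already exhibits an orthogonal splitting into a unit-scaled copy of the unramified trace form (rank $f$, unimodular) and $p$ times a form of rank $f(e-1)$ whose bilinear structure pairs the $a_{i}$-slot with the $a_{e-i}$-slot through the (perfect, because $M/\Q_{p}$ is unramified) trace pairing on $O_{M}$.

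Assembling over all $\mathfrak{p}$ yields a unimodular part of rank $F_{p}$ and a $p$-modular part of rank $n-F_{p}$, i.e.\ a Jordan decomposition. For $p$ odd, every unimodular (resp.\ $p$-modular) $\Z_{p}$-lattice diagonalizes and is determined by rank and discriminant, giving $\langle 1,\dots,1,\alpha_{p}\rangle\oplus p\langle 1,\dots,1,\beta_{p}\rangle$. For $p=2$ tameness forces each $e_{\mathfrak{p}}$ odd, so the indices $i$ and $e-i$ are always distinct; each $2$-scaled anti-diagonal block is then a hyperbolic plane $2\otimes\mathbb{H}$, for a total of $(n-F_{2})/2$ of them, while the unit part is the unramified trace form, whose reduction modulo $2$ is the nondegenerate trace form of the residue field and hence represents an odd norm, making it an odd (Type I) unimodular $\Z_{2}$-lattice that diagonalizes as $\langle\gamma_{1},\dots,\gamma_{F_{2}}\rangle$. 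The explicit units $\alpha_{p},\beta_{p},\gamma_{i}$ are then read off from the relevant discriminants, i.e.\ from powers of the $e_{\mathfrak{p}}$, the units $\zeta$, and the discriminants of the residue extensions.

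The reduction to completions and the structure of tame local extensions are routine; the main obstacle is the bookkeeping that turns the raw Gram data into a genuine Jordan decomposition — verifying orthogonality of the two parts, the exact modularity and ranks, and in particular, at $p=2$, recognizing the $2$-scaled blocks as hyperbolic planes and confirming the odd-type property that makes the unimodular part diagonalizable. Pinning down the explicit values of $\alpha_{p},\beta_{p},\gamma_{i}$ in terms of the ramification data (the content of \cite{Manti3}) is the remaining labor, but is not needed for the shape asserted in the statement.
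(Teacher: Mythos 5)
Your proposal is correct, but note that the paper itself contains no internal proof of this theorem: its ``proof'' is only a citation to \cite[Th\'eor\`eme 1.3]{emp} and \cite[Theorem 0.1]{Manti3}, so what you have written is a self-contained reconstruction of the argument those references carry out. Your route is the standard (and the cited) one: split $q_{L}\otimes\Z_{p}\cong\bigoplus_{\mathfrak{p}\mid p}q_{L_{\mathfrak{p}}}$, model each tame completion as $O_{M}[\pi]$ with $\pi^{e}=p\zeta$ over the unramified subfield $M$, read off the power sums of $X^{e}-p\zeta$ to get a Gram matrix supported on $i=j=0$ and $i+j=e$, and conclude via transfer that one gets a unimodular piece of rank $F_{p}$ plus a $p$-modular piece of rank $n-F_{p}$; the endgame (rank-and-discriminant classification of unimodular $\Z_{p}$-lattices for odd $p$; oddness and hence diagonalizability of the unit part at $p=2$; hyperbolicity of the $2$-modular part because tameness forces $e$ odd) is also right. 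Two small wording fixes, neither a gap: the anti-diagonal block attached to a pair $(i,e-i)$ has rank $2f$ and is isometric to $f$ copies of $2\otimes\mathbb{H}$ (pair a basis of $O_{M}$ with its dual basis for the perfect pairing $(x,y)\mapsto e\,\mathrm{tr}_{M/\Q_{2}}(\zeta xy)$), not to a single hyperbolic plane --- your total count $(n-F_{2})/2$ is nevertheless correct; and the oddness of the unit part at $2$ is cleanest via surjectivity of the residue trace together with $\mathrm{tr}(x^{2})=\mathrm{tr}(x)^{2}$ in characteristic $2$, rather than via nondegeneracy alone, since the mod-$2$ quadratic form $\mathrm{tr}(x^{2})$ is the square of a linear form and its associated bilinear form, not the form itself, is what nondegeneracy refers to.
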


\begin{proof}
See \cite[Theoreme 1.3]{emp} and \cite[Theorem 0.1]{Manti3}.
\end{proof}

\subsubsection{Tame primes}  

Thanks to Theorem \ref{general} we can easily deal with primes that at worst tamely ramified. Recall that we denoted by $V$ the quadratic space  given by $(K, \rm{tr}_{K/\Q}(x^2))$, where $K$ is number field of dimension at least $3$, and by $\Lambda$ the integral lattice in $V$ given by the maximal order $O_{K}.$

 \begin{theorem} \label{TeoA}
Let $p$ be a prime, and suppose that $p$ is unramified or tame in $K$. Then, $\theta_{p}(\Lambda_{p}) \supseteq \Z_{p}^{*} \bmod (\Q_{p}^{*})^2$.

\end{theorem}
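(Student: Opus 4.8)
The plan is to reduce the computation of $\theta_p(\Lambda_p)$ to the explicit Jordan decomposition provided by Theorem \ref{general} and then read off the spinor norm from that decomposition. The key general principle, recorded in \cite[Chapter 11 \S1]{cassels}, is that the local spinor norm $\theta_p(\Lambda_p)$ is generated by the spinor norms of the rotations attached to each Jordan component together with products of the scales; for a unary or binary factor the relevant reflections and rotations have spinor norm controlled by the entries. Concretely, if a Jordan splitting contains two basis vectors of the same scale $p^k$ whose values have unit ratio, the associated rotation in that binary plane already contributes a full unit $\Z_p^*/(\Q_p^*)^2$ to $\theta_p(\Lambda_p)$, which is exactly the inclusion \eqref{contiene} we want.

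First I would dispose of the odd primes $p \neq 2$. By Theorem \ref{general} we have
\[
\Lambda_p \cong \underbrace{\langle 1,\dots,1,\alpha_p\rangle}_{F_p} \;\bigoplus\; p\otimes\underbrace{\langle 1,\dots,1,\beta_p\rangle}_{n-F_p},
\]
with the $\alpha_p,\beta_p$ units. Since $K$ has degree $n\ge 3$, at least one of the two Jordan blocks has dimension $\ge 2$, and in fact whichever block has dimension $\ge 2$ contains two diagonal entries equal to $1$ (equivalently two entries of equal scale whose ratio is a unit square). The spinor norm of the rotation in that rank-two unimodular (up to the scaling by $p$) plane $\langle 1,1\rangle$ is well known to be all of $\Z_p^*$ modulo squares — one computes it as the value $\{x^2+y^2 : \text{represented units}\}$, which covers $\Z_p^*/(\Q_p^*)^2$ for odd $p$. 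Hence $\theta_p(\Lambda_p)\supseteq \Z_p^*\bmod(\Q_p^*)^2$. I would phrase this by invoking the standard formula (see \cite[52:19, 92:5]{Om}) that $\theta_p$ of a $\Z_p$-modular lattice of rank $\ge 2$ contains $\Z_p^*$ for odd $p$, applied to the block of rank $\ge 2$.

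For $p=2$ the argument is cleaner because the decomposition
\[
\Lambda_2 \cong \underbrace{\langle \gamma_1,\dots,\gamma_{F_2}\rangle}_{F_2} \;\bigoplus\; 2\otimes\underbrace{(\mathbb{H}\oplus\cdots\oplus\mathbb{H})}_{(n-F_2)/2}
\]
either contains a hyperbolic plane $\mathbb{H}$ (when $n>F_2$) or has $F_2=n\ge 3$ unimodular diagonal entries. The hyperbolic plane $\mathbb{H}=2xy$ has spinor norm equal to all of $\Q_2^*$, hence certainly contains $\Z_2^*$; and when $F_2=n\ge 3$ the unimodular diagonal block $\langle\gamma_1,\dots,\gamma_n\rangle$ of rank $\ge 3$ over $\Z_2$ again has full unit spinor norm by the standard $2$-adic computation. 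In both branches \eqref{contiene} holds.

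The step I expect to require the most care is the bookkeeping of which Jordan block has rank at least two, since the conclusion really uses $n\ge 3$ in an essential way (a binary form could fail, which is precisely why quadratic fields are excluded). One must check that for every tame ramification type the splitting genuinely produces either a rank-$\ge 2$ modular block at odd $p$ or a hyperbolic plane / rank-$\ge 3$ unit block at $p=2$; the borderline cases are $F_p\in\{0,1,n-1,n\}$, where one block degenerates to rank $\le 1$. In each such case I would verify that the complementary block then has rank $\ge 2$, using $n\ge 3$, so that the unit-spinor-norm conclusion still applies. The remaining input — that a $\Z_p$-modular lattice of rank $\ge 2$ (odd $p$), or $\mathbb{H}$, or a rank-$\ge 3$ unit lattice at $p=2$, has spinor norm containing $\Z_p^*$ — is the standard local spinor norm calculation of \cite[52:19]{Om} and requires no new idea.
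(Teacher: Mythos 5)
Your proposal is correct and follows essentially the same route as the paper: extract the Jordan splitting from Theorem \ref{general}, observe that $n\ge 3$ forces either a modular block of rank $\ge 2$ (odd $p$) or a scaled hyperbolic plane / rank-$\ge 3$ unit block ($p=2$), and cite the standard local spinor-norm computation (your \cite[92:5]{Om} is the same content as the paper's \cite[XI, \S3, Lemmas 3.7, 3.8]{cassels}). Your aside that a rank-$\ge 2$ block ``contains two diagonal entries equal to $1$'' is not literally true when the block has rank exactly $2$ (it is $\langle 1,\alpha_p\rangle$ or $p\otimes\langle 1,\beta_p\rangle$), but this is harmless since the modular-lattice result you invoke needs no such hypothesis.
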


\begin{proof}

Suppose first that $p \neq 2$. By Theorem \ref{general} there are $\alpha, \beta \in \Z_{p}^{*}$ such that  \[\Lambda_{p} \cong \langle 1,1,..., 1,\alpha \rangle \bigoplus \langle p,p,..., p\beta \rangle \]  Since ${\rm dim} \Lambda_{p} \ge 3$ one of the factors in the above orthogonal decomposition has dimension at least $2$. Hence $\Lambda_{p}$ contains a two dimensional orthogonal factor $\Lambda^{(p)}$ among the following $\langle 1, \alpha \rangle,  \langle p, p\beta \rangle$. If $p=2$ then, again by Theorem \ref{general}, there are $\alpha_{1},..., \alpha_{m} \in \Z_{2}^{*}$ such that  
\[\Lambda_{2} \cong  \langle \alpha_{1},..., \alpha_{m} \rangle\bigoplus 2 \otimes ( \mathbb{H} \oplus...\oplus \mathbb{H}).\]  Thus $\Lambda_{2}$ contains an orthogonal factor $\Lambda^{(2)}$ of the form $2\mathbb{H}$ or $\langle \alpha_{1}, \alpha_{2}, \alpha_{3} \rangle.$ The result follows from \cite[XI, \S3, Lemma 3.7, Lemma 3.8]{cassels}.

\end{proof}

\subsection{Wild primes} 

We first recall some classic bounds on the $p$-adic valuation of the discriminant of a number field.

\paragraph{Discriminant bounds.} 
  
Let $n$ be the degree of $K$ and Let $D_{K}$ be its discriminant. Let $p$ be a prime and suppose that $e_{1},...,e_{g}$ are the ramification degrees of $p$ with corresponding residue degrees $f_{1},...,f_{g}$. Let 
$\displaystyle F_{p}:= \sum_{i=1}^{g}f_{i}$ and $M_{p}=\max\{v_{p}(e_{i})\}$. The following well known bounds on the discriminant can be found in \cite[III, \S6, Remark Proposition 13.]{Serre2}.

\begin{theorem}\label{discbound}

\[v_{p}(D_{K}) \leq n-F_{p}+\sum_{i=1}^{g}e_{i}f_{i}v_{p}(e_{i}),\] where equality is obtained if $M_{p}=0$.
\end{theorem}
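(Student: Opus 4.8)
The plan is to reduce the global valuation $v_p(D_K)$ to a sum of local different exponents and then feed in the classical bound on the different of a local field. First I would write $D_K$ as the norm of the different, $\mathfrak{D}_{K/\Q}$, so that the discriminant ideal equals $N_{K/\Q}(\mathfrak{D}_{K/\Q})$. Localizing at $p$ we have $O_K \otimes_{\Z} \Z_p \cong \prod_{i=1}^{g} O_{\mathfrak{p}_i}$, where $O_{\mathfrak{p}_i}$ is the ring of integers of the completion $K_{\mathfrak{p}_i}$, a finite extension of $\Q_p$ with ramification index $e_i$ and residue degree $f_i$. The different is compatible with completion, $\mathfrak{D}_{K/\Q}O_{\mathfrak{p}_i} = \mathfrak{D}_{K_{\mathfrak{p}_i}/\Q_p}$, so writing $d_i := v_{\mathfrak{p}_i}(\mathfrak{D}_{K_{\mathfrak{p}_i}/\Q_p})$ for the local different exponent and using $N\mathfrak{p}_i = p^{f_i}$ gives
\[ v_p(D_K) = \sum_{i=1}^{g} f_i\, d_i. \]
Thus the whole theorem reduces to an estimate on each $d_i$.

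Second, I would invoke the classical bound on the different exponent of a local field, which is exactly the content of the reference \cite[III, \S6, Proposition 13]{Serre2}: for each $i$ one has
\[ e_i - 1 \leq d_i \leq e_i - 1 + v_{\mathfrak{p}_i}(e_i), \]
and the lower bound is attained precisely when $\mathfrak{p}_i$ is tamely ramified, i.e. when $p \nmid e_i$. Since the valuation $v_{\mathfrak{p}_i}$ restricts on $\Q_p$ to $e_i \cdot v_p$, we have $v_{\mathfrak{p}_i}(e_i) = e_i\, v_p(e_i)$, so the upper estimate reads $d_i \leq e_i - 1 + e_i\, v_p(e_i)$.

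Third, I would sum the local bounds with the weights $f_i$ and simplify using the fundamental identity $\sum_i e_i f_i = n$ together with $\sum_i f_i = F_p$, obtaining
\[ v_p(D_K) = \sum_{i} f_i\, d_i \leq \sum_{i} f_i(e_i - 1) + \sum_{i} e_i f_i\, v_p(e_i) = n - F_p + \sum_{i=1}^{g} e_i f_i\, v_p(e_i), \]
which is the asserted inequality. For the equality statement, note that $M_p = 0$ means $v_p(e_i) = 0$ for every $i$, i.e. every prime above $p$ is at worst tamely ramified; then the left-hand bound in the local estimate is attained, $d_i = e_i - 1$, the extra sum vanishes, and both sides collapse to $n - F_p$.

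The only genuinely nontrivial ingredient is the upper bound $d_i \leq e_i - 1 + v_{\mathfrak{p}_i}(e_i)$ in the wildly ramified case, which controls how large the different exponent can grow once $p \mid e_i$; everything else is bookkeeping with the norm-of-the-different formula and the fundamental identity. I would simply cite this bound from \cite{Serre2}. Reconstructing it from scratch is the one place where wild ramification really enters, and would require either the ramification-filtration formula $d_i = \sum_{j \geq 0}(|G_j| - 1)$ in the Galois case, or a direct estimate of $v_{\mathfrak{p}_i}\!\left(g'(\pi)\right)$ for a minimal polynomial $g$ of a generator $\pi$ of $O_{\mathfrak{p}_i}$ over $\Z_p$, using that local extensions are monogenic; this is where I expect the main difficulty to lie.
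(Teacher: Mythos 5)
Your proposal is correct and is essentially the paper's own proof unpacked: the paper simply cites \cite[III, \S6, Remark after Proposition 13]{Serre2}, and the content of that citation is precisely your argument --- write $v_p(D_K)=\sum_i f_i d_i$ via the norm of the different, apply Serre's local bound $e_i-1\leq d_i\leq e_i-1+v_{\mathfrak{p}_i}(e_i)=e_i-1+e_i v_p(e_i)$ (with equality on the left exactly in the tame case), and sum using $\sum_i e_i f_i=n$ and $\sum_i f_i=F_p$. No gap; the one nontrivial ingredient you defer to \cite{Serre2} is exactly the ingredient the paper defers there as well.
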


\begin{remark}
Notice that $M_{p}=0$ is another way to say that the prime $p$ is either unramified or tame in $K$. In such case $v_p(D_{K})=n-F_{p}$ follows immediately from Theorem \ref{general}.
\end{remark}
An immediate consequence of Theorem \ref{discbound} is:
\begin{corollary}\label{property}
\[v_{p}(D_{K}) \leq n(M_{p}+1)-F_{p}.\]

\end{corollary}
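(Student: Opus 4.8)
The plan is to read off the bound directly from Theorem \ref{discbound} by controlling the error term $\sum_{i=1}^{g} e_{i}f_{i}v_{p}(e_{i})$, so that only the coarse quantity $M_{p}$ survives. First I would recall the fundamental degree identity
\[n = \sum_{i=1}^{g} e_{i}f_{i},\]
which holds because $e_{1},\dots,e_{g}$ and $f_{1},\dots,f_{g}$ are the ramification and residue degrees of $p$ in the degree-$n$ field $K$.

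Next, since $M_{p}=\max\{v_{p}(e_{i})\}$, every term of the sum satisfies $v_{p}(e_{i}) \le M_{p}$. Because $e_{i},f_{i}\ge 1$ and $v_{p}(e_{i})\ge 0$, replacing each valuation by its maximum can only increase the sum, giving
\[\sum_{i=1}^{g} e_{i}f_{i}v_{p}(e_{i}) \le M_{p}\sum_{i=1}^{g} e_{i}f_{i} = M_{p}\,n.\]
Substituting this estimate into the inequality supplied by Theorem \ref{discbound} yields
\[v_{p}(D_{K}) \le n - F_{p} + M_{p}\,n = n(M_{p}+1) - F_{p},\]
which is exactly the asserted bound.

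There is no real obstacle here: the entire content is the degree identity together with the elementary bound $v_{p}(e_{i})\le M_{p}$, and the only point requiring a moment's care is the \emph{sign} bookkeeping that legitimizes passing from $v_{p}(e_{i})$ to $M_{p}$ inside the weighted sum, which is immediate from the positivity of the $e_{i}f_{i}$. Accordingly the corollary follows at once, as claimed.
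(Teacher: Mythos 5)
Your proof is correct and is exactly the argument the paper has in mind: the corollary is stated as an immediate consequence of Theorem \ref{discbound}, obtained by bounding each $v_{p}(e_{i})$ by $M_{p}$ and using the degree identity $\sum_{i=1}^{g} e_{i}f_{i}=n$. Nothing further is needed.
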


\paragraph{Odd primes.} 

In the case of wild ramification it is convenient to separate the cases $p=2$ and $p\ne 2.$ Recall that in this section we are always assuming that that $n \ge 3$.

\begin{theorem}\label{over5}
Let $p \ge 5$ be a prime. Then,  $\theta_{p}(\Lambda_{p}) \supseteq \Z_{p}^{*} \bmod (\Q_{p}^{*})^2$. 
\end{theorem}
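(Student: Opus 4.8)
The plan is to argue by contradiction, pitting the lower bound on $v_p(D_K)$ that comes from the failure of (\ref{contiene}) against the upper bound coming from ramification theory. First I would dispose of the case $p > n$: every ramification index satisfies $e_i \le e_i f_i \le n < p$, so $p$ cannot divide any $e_i$, whence $p$ is at worst tamely ramified and Theorem \ref{TeoA} already delivers $\theta_p(\Lambda_p) \supseteq \Z_p^* \bmod (\Q_p^*)^2$. Similarly, if $M_p = 0$ the prime is unramified or tame and Theorem \ref{TeoA} applies. Hence I may assume $p$ is wildly ramified, so that $M_p \ge 1$ and $p \le n$; in particular $n \ge p \ge 5$.

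Now suppose, for contradiction, that $\theta_p(\Lambda_p) \not\supseteq \Z_p^* \bmod (\Q_p^*)^2$. Writing $D := D_K$ for the discriminant of the lattice $\Lambda$ and recalling $n > 2$, Theorem \ref{Ebounds}.1 gives $\frac{n(n-1)}{2} \le v_p(D_K)$. On the other hand, Corollary \ref{property} together with $F_p \ge 1$ yields $v_p(D_K) \le n(M_p+1) - F_p \le n(M_p+1) - 1$. Combining the two inequalities and clearing denominators, I get $\frac{(n-1)(n-2)}{2} \le n\,M_p$, so that $M_p > \frac{n-3}{2}$ (the gap being exactly $\frac{2}{2n} > 0$). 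Since $M_p$ is a non-negative integer, this forces $M_p \ge \frac{n-2}{2}$.

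It then remains to contradict this with the trivial size estimate for $M_p$. By definition $M_p = \max_i v_p(e_i)$, and for the index attaining the maximum one has $p^{M_p} \mid e_i$, whence $p^{M_p} \le e_i \le e_i f_i \le n$. Feeding $p \ge 5$ and $M_p \ge \frac{n-2}{2}$ into $p^{M_p} \le n$ gives $n \ge 5^{(n-2)/2}$, which is false for every $n \ge 5$: at $n = 5$ the right-hand side is $5\sqrt{5} > 5$, and each increment of $n$ multiplies it by $\sqrt{5} > 1$ while the left-hand side grows only by $1$. This contradiction shows (\ref{contiene}) holds at $p$, completing the proof.

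I expect the only genuinely substantive step to be the reduction in the first paragraph, namely recognizing that wild ramification at $p$ forces $n \ge p$ (via $p^{M_p} \le e_i \le n$); once that is in place, the result is an elementary clash between the quadratic-in-$n$ lower bound $\frac{(n-1)(n-2)}{2}$ for $v_p(D_K)$ and the bound $v_p(D_K) \le n(M_p+1)-1$, closed off by the exponential growth of $p^{M_p}$ against the linear $n$. The comparison $5^{(n-2)/2} > n$ for $n \ge 5$ is where I would be most careful to check the base case and the monotonicity, but it presents no real difficulty.
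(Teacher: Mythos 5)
Your proof is correct and follows essentially the same route as the paper: dispose of the tame case via Theorem \ref{TeoA}, then play the lower bound of Theorem \ref{Ebounds}.(1) against the discriminant bound of Corollary \ref{property}, using $p^{M_{p}} \leq n$ and the exponential-versus-linear comparison $5^{(n-2)/2} > n$. The only differences are organizational (you argue by contradiction and derive $M_{p} \geq \frac{n-2}{2}$, whereas the paper directly shows $M_{p} \leq \frac{n-3}{2}$ for $n \geq 5$), which is immaterial.
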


\begin{proof}
If $n <5$ then $p$ is either unramified or tame, both cases covered in Theorem \ref{TeoA}. Hence we may assume that $5 \leq n$. Under this assumption we have that $n \leq 5^{\frac{n-3}{2}} \leq  p^{\frac{n-3}{2}}$. Since
$p^{M_{p}} \leq n$ it follows that $M_{p} \leq \frac{n-3}{2}$, thus $v_{p}(D_{K}) \leq n(n-1)/2-F_{p}< n(n-1)/2.$ The result follows from Theorem \ref{Ebounds}.(1)

\end{proof}

\begin{theorem}\label{tres}
$\theta_{3}(\Lambda_{3}) \supseteq \Z_{3}^{*} \bmod (\Q_{3}^{*})^2$. 
\end{theorem}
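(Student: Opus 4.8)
\textbf{Proof proposal for Theorem \ref{tres}.}

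The plan is to mimic the structure of the proof of Theorem \ref{over5}, reducing the claim to the discriminant inequality of Theorem \ref{Ebounds}.(1), which for $p=3$ reads: if $\theta_{3}(\Lambda_{3}) \not\supseteq \Z_{3}^{*} \bmod (\Q_{3}^{*})^2$, then $n(n-1)/2 \leq v_{3}(D_{K})$. So it suffices to bound $v_{3}(D_{K})$ strictly below $n(n-1)/2$ and derive a contradiction. Using Corollary \ref{property} we have $v_{3}(D_{K}) \leq n(M_{3}+1) - F_{3} \leq n(M_{3}+1)-1$, where $M_{3}=\max\{v_{3}(e_{i})\}$. The key arithmetic constraint is $3^{M_{3}} \leq p^{M_{3}} \leq n$, i.e. $M_{3} \leq \log_{3} n$. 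The difficulty, relative to the case $p\ge 5$, is that $3$ is small enough that the crude bound $n \leq p^{(n-3)/2}$ need not hold for the smallest relevant degrees, so the clean argument of Theorem \ref{over5} breaks down for a few small values of $n$ that must be handled by hand.

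First I would verify the inequality $v_{3}(D_{K}) < n(n-1)/2$ for all degrees $n$ large enough that the general estimate works. Concretely, I want $n(M_{3}+1) - 1 < n(n-1)/2$, i.e. $M_{3}+1 < (n-1)/2 + 1/n$, which is comfortably implied by $M_{3} \leq (n-3)/2$. Combined with $M_{3} \leq \log_{3} n$, it then suffices to check that $\log_{3} n \leq (n-3)/2$, equivalently $n \leq 3^{(n-3)/2}$; one verifies this holds for all $n$ from some threshold onward (around $n \geq 7$), completing those cases exactly as in Theorem \ref{over5}.

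The main obstacle, and the part requiring genuine attention, is the finite list of small degrees $n \in \{3,4,5,6\}$ (those not covered by the asymptotic bound) where wild ramification at $3$ is actually possible, namely when $3 \mid e_{i}$ for some $i$, forcing $M_{3}\ge 1$ and $n \ge 3$. For these I would argue as follows. When $M_{3}=1$ the bound gives $v_{3}(D_{K}) \leq 2n - F_{3} \leq 2n-1$, and I must compare $2n-1$ against $n(n-1)/2$; this already yields $2n-1 < n(n-1)/2$ for $n \geq 6$, disposing of $n=6$, and leaving only $n \in \{3,4,5\}$ with possibly $M_{3}\ge 1$. For these remaining low-degree cases the constraint $3^{M_{3}} \leq n$ forces $M_{3}=1$ (since $3^{2}=9 > 5$), and one must examine the genuinely wild Jordan structure of $\Lambda_{3}$ directly rather than through the discriminant count.

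Here I would return to the diagonalization $\Lambda_{3}\cong\langle \alpha_{1},\dots,\alpha_{n}\rangle$ and invoke the reasoning recalled in the Remark following Theorem \ref{Ebounds}: if any two valuations coincide, $v_{3}(\alpha_{i})=v_{3}(\alpha_{j})$, then $\Lambda_{3}$ splits off a binary factor $\Lambda^{(3)}$ with $\theta_{3}(\Lambda^{(3)})\supseteq\Z_{3}^{*}$ and we are done; otherwise the valuations are distinct, whence $\sum_i v_{3}(\alpha_{i})\ge n(n-1)/2$, which together with the discriminant upper bound $v_{3}(D_{K}) = \sum_i v_{3}(\alpha_{i}) \leq 2n-1$ forces $n(n-1)/2 \leq 2n-1$, impossible for $n\ge 4$. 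This settles $n\in\{4,5\}$, and the single remaining case $n=3$ I expect to dispatch by noting that a cubic field wildly ramified at $3$ has $v_{3}(D_{K})\le 2n-1 = 5 < 3 = n(n-1)/2$ fails to even reach the threshold, so no obstruction can occur. I anticipate the $n=3$ endpoint to be the delicate one, and the cleanest route may be to appeal directly to the explicit ternary local forms over $\Z_{3}$ rather than to the counting bound.
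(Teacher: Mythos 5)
Your reduction for $n\ge 5$ is sound and matches the paper's strategy (for $5\le n\le 8$ one has $M_{3}=1$, hence $v_{3}(D_{K})\le n(n-1)/2-F_{3}$, and for $n\ge 9$ one has $n<3^{(n-3)/2}$), but the two cases you yourself flag as the hard ones, $n=4$ and $n=3$, are exactly where your argument breaks, and both breaks are genuine. For $n=4$ your claimed contradiction ``$n(n-1)/2\le 2n-1$, impossible for $n\ge 4$'' is false: $n(n-1)/2=6\le 7=2n-1$, so the distinct-valuation count plus the crude bound $v_{3}(D_{K})\le 2n-1$ yields no contradiction at all. To close this case you need the sharper bound of Theorem \ref{discbound}: wild ramification at $3$ in degree $4$ forces $3O_{K}=\mathcal{B}_{1}^{3}\mathcal{B}_{2}$, hence $F_{3}=2$ and $v_{3}(D_{K})\le 4-2+3=5<6$ (alternatively, as the paper does, split $\Lambda_{3}\cong\langle 1\rangle\oplus\Lambda'_{3}$ with $\Lambda'_{3}$ the trace lattice of a totally ramified cubic extension of $\Q_{3}$ and reduce to the cubic case).

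For $n=3$ the situation is worse: your inequality ``$v_{3}(D_{K})\le 2n-1=5<3=n(n-1)/2$'' is written backwards, and in fact no discriminant count of this type can work, since a wildly ramified cubic has $3\le v_{3}(D_{K})\le 5$, i.e.\ the threshold $n(n-1)/2=3$ of Theorem \ref{Ebounds}.(1) is always met. The missing idea, which is the crux of the paper's proof, is that $3$ being (totally, wildly) ramified forces $\mathrm{tr}_{K/\Q}(O_{K})\subseteq 3\Z$, so in any diagonalization $\Lambda_{3}\cong\langle 3^{a_{1}}u_{1},3^{a_{2}}u_{2},3^{a_{3}}u_{3}\rangle$ \emph{every} exponent satisfies $a_{i}\ge 1$. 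Distinct exponents would then give $v_{3}(D_{K})=a_{1}+a_{2}+a_{3}\ge 1+2+3=6>5$, a contradiction; hence two exponents coincide, $\Lambda_{3}$ splits off a binary factor $\langle 3^{a},3^{a}u\rangle$, and one concludes by \cite[XI, \S3, Lemma 3.7, Lemma 3.8]{cassels}. Without this divisibility observation (or some equally explicit analysis of the wild cubic local forms, which you defer), the $n=3$ case — and with it the theorem — remains unproved.
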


\begin{proof}

It is convenient to divide the problem in cases according to the degree $n$. As in the proof of Theorem \ref{over5} we may assume that $3$ 
has wild ramification in $K$ i.e., $1\leq M_{3}$.\\

\begin{itemize}

\item If $5 \leq n \leq 8$ then $M_{3}=1$. Thus, $M_{3} \leq \frac{n-3}{2}$ which leads to the result in the exact same way as in the proof 
of Theorem \ref{over5}.\\

\item If $9 \leq n$ then $n < 3^{\frac{n-3}{2}}$. Since $3^{M_{3}} \leq n$ we see that $M_{3} < \frac{n-3}{2}$ and we argue as before.\\

\item Suppose $n=3$. Since $3$ is ramified then tr$_{K/\Q}(O_{K}) \subseteq 3\Z$. Thus, after diagonalizing $\Lambda$ over $\Z_{3}$ we get that $\Lambda_{3} \cong \langle 3^{a_{1}}u_{1}, 3^{a_{2}}u_{2}, 3^{a_{3}}u_{3} \rangle$ where each $u_{i} \in \Z_{3}^{*}$ and the $a_{i}$'s are positive integers. Notice that $3$ is totally ramified in $K$, hence by Theorem \ref{discbound} we have that $v_{3}(D_{K}) \leq 5$. It follows that there are $i,j$ with $i \neq j$ and such that $a_{i}=a_{j}$, otherwise $v_{3}(D_{K}) =a_{1}+a_{2}+a_{3} \ge 1+2+3=6.$ Thus $\Lambda_{3}$ contains a two dimensional orthogonal factor $\Lambda^{(3)}$ of the form $\langle 3^{a}, 3^{a}u \rangle$, where $a$ is a positive integer and $u \in \Z_{3}^{*}$, and the result follows from \cite[XI, \S3, Lemma 3.7, Lemma 3.8.]{cassels}.\\  

\item Suppose $n=4$. Since $3$ is ramified then $3O_{K}=\mathcal{B}_{1}^{3}\mathcal{B}_{2}$ for some prime ideals $\mathcal{B}_{i}$'s of $O_{K}$. It follows that $\Lambda_{3} \cong \langle 1 \rangle \oplus \Lambda^{'}_{3}$
where $\Lambda^{'}_{3}$ is the integral trace lattice of a totally ramified cubic extension of $\Q_{3}$. By the same argument we used in the case $n=3$ applied to $\Lambda^{'}_{3}$ we conclude that $\Lambda^{'}_{3}$  has an orthogonal factor  $\Lambda^{(3)}$ of the form $\langle 3^{a}, 3^{a}u \rangle$, and thus it is also an orthogonal factor of $\Lambda_{3}$ from which the result follows.
\end{itemize}

\end{proof}

\paragraph{The case $p=2$.}
Before dealing with the even prime we need an auxilliary result.
\begin{lemma}\label{lema2} 
Suppose $n \neq 4$. Then, \[v_{2}(D_{K}) \leq n(n-2)\] with equality possible only in the case $n=3$.
\end{lemma}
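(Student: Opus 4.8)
The plan is to control $v_2(D_K)$ through the wild‐ramification invariant $M_2=\max_i\{v_2(e_i)\}$, using the discriminant estimates already recorded. The starting observation is that, since $2^{M_2}$ divides one of the ramification indices $e_i$ and every such index is at most $n=\sum_i e_i f_i$, we always have $2^{M_2}\le n$. I would then try to run the whole argument off Corollary \ref{property}, namely $v_2(D_K)\le n(M_2+1)-F_2$, and reduce the desired bound $v_2(D_K)\le n(n-2)$ to the purely numerical statement $M_2\le n-3$ (using $F_2\ge 1$, which also forces the resulting inequality to be strict and hence rules out equality).

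For $n\ge 5$ this numerical reduction goes through cleanly. If $M_2\le 2$ then $M_2+3\le 5\le n$; if $M_2\ge 3$ then the elementary inequality $k+3\le 2^k$ (valid for $k\ge 3$) together with $2^{M_2}\le n$ gives $M_2+3\le 2^{M_2}\le n$. Either way $M_2\le n-3$, so Corollary \ref{property} yields $v_2(D_K)\le n(n-2)-F_2<n(n-2)$; in particular equality cannot occur when $n\ge 5$.

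The main obstacle --- and the reason the hypothesis $n\neq 4$ appears --- is that Corollary \ref{property} is too lossy for the smallest degrees: when $n=3$ one can have $M_2=1$ (a factor with $e=2$), and when $n=4$ one can have $M_2=2$ (a factor with $e=4$), in which cases $M_2>n-3$ and the crude bound overshoots $n(n-2)$. Since $n=4$ is excluded by hypothesis, only $n=3$ remains, and here I would abandon Corollary \ref{property} in favour of the sharper Theorem \ref{discbound}, $v_2(D_K)\le n-F_2+\sum_i e_i f_i\, v_2(e_i)$. For $n=3$, if $2$ is unramified or tame then $v_2(D_K)=3-F_2\le 2<3$, while the only wildly ramified splitting type compatible with $\sum_i e_i f_i=3$ is $(e_1,f_1)=(2,1)$, $(e_2,f_2)=(1,1)$, for which the formula gives $v_2(D_K)\le 3-2+2=3=n(n-2)$. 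This is the unique place equality can be attained, matching the statement. Assembling the cases $n=3$ and $n\ge 5$ completes the proof; the delicate point throughout is simply choosing the sharp discriminant estimate (Corollary \ref{property} versus Theorem \ref{discbound}) appropriate to the range of $n$.
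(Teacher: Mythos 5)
Your proof is correct and follows essentially the same route as the paper: both establish $M_2 \le n-3$ for $n \ge 5$ from $2^{M_2}\le n$ and then apply Corollary \ref{property} (with $F_2\ge 1$ giving strictness), and both settle $n=3$ by invoking Theorem \ref{discbound} on the unique wild splitting type $(e_1,f_1)=(2,1)$, $(e_2,f_2)=(1,1)$. The only differences are cosmetic: you organize the numerical case split by the size of $M_2$ rather than of $n$, and you treat the tame totally ramified cubic case ($e=3$) explicitly, a case the paper's $n=3$ discussion passes over silently.
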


\begin{proof}
If $n\geqslant 6$ then $2^{n-3}>n \ge 2^{M_{2}}$, thus $M_{2} < n-3$. Since $2^{M_{2}} \leq n$ we have that $M_{2} \leq 2$ if $n=5$. In particular, for all $n\geqslant 5$ we have that $M_{2} \leq n-3.$ Thanks to Corollary \ref{property} we have that $v_{2}(D_{K}) \leq n(n-2) -F_{2} < n(n-2)$. Now suppose $n=3.$ If $2$ is unramified the inequality is obvious. Otherwise $2$ has ramification degrees $1,2$ and residue degrees $1,1$ which by Theorem \ref{discbound} implies that  $v_{2}(D_{K}) \leq 3 =n(n-2).$
\end{proof}

\begin{proposition}\label{dosnonreal}
Suppose that $K$ is not a totally real quartic field. Then, $\theta_{2}(\Lambda_{2}) \supseteq \Z_{2}^{*} \bmod (\Q_{2}^{*})^2$. 
\end{proposition}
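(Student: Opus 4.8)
The plan is to argue by contradiction, playing the lower bounds on $v_2(D_K)$ from Theorem \ref{Ebounds} against the upper bounds supplied by Lemma \ref{lema2} and Corollary \ref{property}. First I would record the signature input: the quadratic space $V=(K,\mathrm{tr}_{K/\Q}(x^2))$ has signature $(r_1+r_2,r_2)$, where $r_1,r_2$ are the numbers of real and of complex places of $K$, so $V$ is definite exactly when $K$ is totally real and is indefinite otherwise. Assume $\theta_2(\Lambda_2)\not\supseteq \Z_2^* \bmod (\Q_2^*)^2$. Then Theorem \ref{Ebounds} applies, and I would split the argument according to whether $V$ is indefinite or definite.

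In the indefinite case ($K$ not totally real), Theorem \ref{Ebounds}.(3) forces $n(n-1)\leq v_2(D_K)$. When $n\neq 4$, Lemma \ref{lema2} gives $v_2(D_K)\leq n(n-2)<n(n-1)$, a contradiction. When $n=4$, Lemma \ref{lema2} is unavailable, so instead I would invoke Corollary \ref{property} together with the standing inequality $2^{M_2}\leq n=4$ (hence $M_2\leq 2$), obtaining $v_2(D_K)\leq 4(M_2+1)-F_2\leq 12-F_2\leq 11<12=n(n-1)$, again a contradiction. This settles every field that is not totally real.

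In the definite case $K$ is totally real, and by hypothesis $n\neq 4$; here Theorem \ref{Ebounds}.(2) forces $n(n-3)+2\lfloor \tfrac{n+1}{2}\rfloor\leq v_2(D_K)$. Evaluating the left-hand side, for odd $n$ it equals $(n-1)^2$ and for even $n$ it equals $n(n-2)$. For odd $n$ one has $(n-1)^2=n(n-2)+1>n(n-2)\geq v_2(D_K)$ by Lemma \ref{lema2}, a contradiction. For even $n$ (necessarily $n\geq 6$, since $n\neq 4$), the bound reads $n(n-2)\leq v_2(D_K)$, which would force equality in Lemma \ref{lema2}; but that lemma permits equality only when $n=3$, again a contradiction. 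Combining the two cases yields $\theta_2(\Lambda_2)\supseteq \Z_2^*$.

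The crux—and precisely the reason the statement must exclude totally real quartics—lies in the quartic case. For $n=4$ Lemma \ref{lema2} gives no information, so one is forced to bound $v_2(D_K)$ directly through Corollary \ref{property}; this bound of $11$ defeats the indefinite lower bound $n(n-1)=12$, but it cannot defeat the definite lower bound, which at $n=4$ is merely $n(n-3)+2\lfloor \tfrac{5}{2}\rfloor=8$, well below the attainable values of $v_2(D_K)$. Hence the totally real quartic genuinely escapes this argument and is deferred to a separate treatment.
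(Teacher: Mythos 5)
Your proposal is correct and follows essentially the same route as the paper: both arguments play the lower bounds of Theorem \ref{Ebounds} against Lemma \ref{lema2} for $n\neq 4$ and against Corollary \ref{property} (giving $v_{2}(D_{K})\leq 11 < 12 = n(n-1)$) for the non--totally-real quartic case. The only difference is organizational---you split by signature first and unpack the paper's inequality $n(n-2)\leq \min\bigl\{n(n-3)+2\lfloor\tfrac{n+1}{2}\rfloor,\, n(n-1)\bigr\}$ into explicit odd/even computations, which is the same content.
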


\begin{proof}

Since $n(n-2) \leq \min\{n(n-3) + 2\left \lfloor \frac{n+1}{2} \right \rfloor , n(n-1)\}$ the result follows, in the case $n \neq 4$, from Lemma \ref{lema2} and Theorem \ref{Ebounds}. If $n=4$, we have that $M_{2} \leq 2$ and by Corollary \ref{property} we see that  $v_{2}(D_{K}) \leq 11$. On the other hand since $K$ is  non totally real $\Lambda$ is indefinite and the result follows from Theorem \ref{Ebounds}.(3).  

\end{proof}

\begin{remark}
Most of the hard work in proving the main result comes when dealing with the primes $2$ and $3$ for small values of $n$. In particular notice that at this point we can already conclude that  \[{\rm gen}(q_{K})={\rm spin}^{+}(q_{K})\] for all number fields $K$ that are either at worst tame at $2$ or that are non-quartic and totally real. Next we deal with this, the final case.
\end{remark}

\begin{theorem}\label{dos}
$\theta_{2}(\Lambda_{2}) \supseteq \Z_{2}^{*} \bmod (\Q_{2}^{*})^2$.

\end{theorem}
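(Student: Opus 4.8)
The plan is to close the single case left open by Proposition~\ref{dosnonreal}. First I would note that, by Theorem~\ref{TeoA} and Proposition~\ref{dosnonreal}, the conclusion is already known unless $K$ is a totally real quartic field in which $2$ is wildly ramified; so I assume this. Since $K$ is totally real, $V$ is definite, so the relevant bound in Theorem~\ref{Ebounds} is part (2), which for $n=4$ reads $n(n-3)+2\left\lfloor\frac{n+1}{2}\right\rfloor=8\le v_2(D_K)$. Taking the contrapositive, it suffices to establish $\theta_2(\Lambda_2)\supseteq\Z_2^{*}\bmod(\Q_2^{*})^2$ for all such $K$ with $v_2(D_K)\ge 8$.

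Next I would use the discriminant bound to locate the only dangerous ramification type. By Corollary~\ref{property}, if $2$ is not totally ramified then no ramification index equals $4$, so $M_2=1$, while $F_2\ge 2$ (either $g\ge 2$, or $g=1$ with $f\ge 2$); hence $v_2(D_K)\le 4(M_2+1)-F_2\le 8-2=6<8$, and Theorem~\ref{Ebounds}.(2) disposes of every such case. I am therefore reduced to $K\otimes\Q_2=E$, a totally ramified quartic extension of $\Q_2$, where $M_2=2$, $F_2=1$, and $v_2(D_K)\le 11$ by Theorem~\ref{discbound}; only the subrange $8\le v_2(D_K)\le 11$ remains, and there are only finitely many such $E$.

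For these $E$ I would compute the $2$-adic trace lattice directly, since Theorem~\ref{general} is unavailable at a wild prime. Writing $O_E=\Z_2[\pi]$ for a root of an Eisenstein quartic, the Gram matrix of $q_K\otimes\Z_2$ in the basis $1,\pi,\pi^2,\pi^3$ is the Hankel matrix $[\mathrm{tr}(\pi^{i+j})]$ of power sums. Because $E/\Q_2$ is wildly totally ramified one has $\mathrm{tr}(O_E)\subseteq 2\Z_2$, so every Jordan constituent of $\Lambda_2$ has scale $\ge 1$. I would then split as in Theorem~\ref{TeoA}: if the Jordan decomposition contains a scaled hyperbolic plane $2^{a}\mathbb{H}$ or a unimodular constituent of rank $\ge 3$, I extract that orthogonal factor and finish via \cite[XI, \S3, Lemma 3.7, Lemma 3.8]{cassels}. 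Otherwise all constituents have rank $\le 2$, carry no hyperbolic plane, and sit at distinct scales $\ge 1$ summing to $v_2(D_K)\le 11$; since four distinct positive integers already sum to at least $10$, this forces only finitely many scale patterns (e.g. $\langle 2^{a_1}u_1,\dots,2^{a_4}u_4\rangle$ with scale set $\{1,2,3,4\}$ or $\{1,2,3,5\}$, and the analogous short lists when a non-split even binary occurs), for which I compute $\theta_2(\Lambda_2)$ from the explicit local spinor-norm description of \cite[Chapter 11]{cassels} and verify directly that it contains $\Z_2^{*}$.

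The main obstacle is precisely this final wild computation: the Jordan decomposition of the trace form of $E$ must be extracted by hand from the Hankel matrix, and the $2$-adic spinor norm of a diagonal lattice with distinct scales is genuinely delicate. The value of the preceding reductions is that the surviving possibilities are pinned down to a short, explicit list; the case is moreover non-vacuous, as $\Q(\zeta_{16})^{+}$ is a totally real quartic field with $2$ totally ramified and $v_2(D_K)=11$, so the required verification is a finite but essential check rather than an impossibility argument.
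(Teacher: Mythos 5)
Your reductions are correct, and they are exactly the paper's: Theorem \ref{TeoA} and Proposition \ref{dosnonreal} leave only totally real quartic fields; your splitting analysis (if $2$ is not totally ramified then $M_{2}\le 1$ and $F_{2}\ge 2$, so $v_{2}(D_{K})\le 6$) is right; and Theorem \ref{discbound} caps the totally ramified case at $v_{2}(D_{K})\le 11$. (The paper additionally discards $v_{2}(D_{K})=8$ at this stage and works with the $36$ fields having $v_{2}(D_{K})\in\{9,10,11\}$; keeping $8$, as you do, is safer, since Theorem \ref{Ebounds}.(2) as stated only excludes $v_{2}(D_{K})\le 7$.) The genuine gap is that your argument stops exactly where the paper's proof begins: the ``finite but essential check'' you defer \emph{is} the proof of Theorem \ref{dos}. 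The paper tabulates, from \cite{Jones}, a generator $\alpha_{L}$ of the ring of integers of each of the $36$ fields, computes the Gram matrix of the trace form in the basis $\{1,\alpha_{L},\alpha_{L}^{2},\alpha_{L}^{3}\}$, and diagonalizes it by hand, exhibiting in every case an orthogonal factor $2^{3}\mathbb{H}$, $\langle 2^{e}u_{1},2^{e}u_{2},2^{e}u_{3}\rangle$ or $\langle 2^{e}u_{1},2^{e}u_{2},2^{e+1}u_{3}\rangle$ to which \cite[XI, \S3, Lemma 3.8]{cassels} applies. A plan that ends with ``I would compute and verify'' has not established the statement.

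Worse, the fallback you describe would stall if executed with the tools you cite. Your list of surviving shapes omits Jordan splittings with odd rank-two constituents at well-separated scales, for instance $\langle 2u_{1},2u_{2},2^{4}u_{3},2^{4}u_{4}\rangle$: this is positive definite, all scales are $\ge 1$, $v_{2}(\det)=10\le 11$, and it has no even constituent, no rank-three constituent and no pair of adjacent scales, so it survives every constraint you derived; yet Cassels' Lemmas 3.7 and 3.8 are silent about it, and deciding it requires finer spinor-norm computations (Earnest--Hsia \cite{ernts}, or ad hoc reflections in imprimitive vectors). The irony is that an observation you already made closes the case uniformly, with no enumeration at all: since $2$ is totally and wildly ramified, $\mathrm{tr}_{K/\Q}(O_{K})\subseteq 2\Z$, so $N:=\bigl(O_{K},\tfrac{1}{2}q_{K}\bigr)$ is a full-rank \emph{integral} lattice in the \emph{definite} rational quadratic space $\bigl(K,\tfrac{1}{2}\mathrm{tr}_{K/\Q}(x^{2})\bigr)$, with $v_{2}(\mathrm{disc}(N))=v_{2}(D_{K})-4\le 7<8=n(n-3)+2\left\lfloor \frac{n+1}{2}\right\rfloor$. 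Theorem \ref{Ebounds}.(2) applied to $N$ gives $\theta_{2}(N_{2})\supseteq \Z_{2}^{*}$, and since a rotation is a product of an even number of reflections, its spinor norm is unchanged modulo squares when the form is rescaled; as $\Lambda_{2}$ and $N_{2}$ are the same $\Z_{2}$-module with forms differing by the factor $2$, we get $\theta_{2}(\Lambda_{2})=\theta_{2}(N_{2})\supseteq \Z_{2}^{*}$, covering all of $8\le v_{2}(D_{K})\le 11$ at once. Without either this scaling argument or the paper's explicit table, your proposal remains an outline of the paper's strategy rather than a proof.
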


\begin{proof}

We may assume that $n=4$ and that $M_{2} \in \{1,2\}$. If $M_{2}=1$ then, by  Corollary \ref{property}, $v_{2}(D_{K}) \leq 7 <n(n-2)$ and we use the argument of Proposition \ref{dosnonreal}. Furthermore, if $c:=v_{2}(D_{K}) \leq 8$ the same argument applies hence we may assume that $M_{2}=2$ and that $9 \leq c$. Therefore $L:=K\otimes_{\Q}\Q_{2}$ is a totally ramified quartic extension of $\Q_{2}$, with discriminant of valuation $ c \in \{9,10,11\}$. There are 36 such fields (see for instance \cite{Jones}).The ring of integers of $L$ is of the form $O_{L}=\Z_{2}[\alpha_{L}]$ for some in $\alpha_{L} \in O_{L}$. The table below consists of the minimal polynomial of $\alpha_{L}$ for each one of the 36 fields and their corresponding discriminant valuation $c$.

\begin{center} 
 \begin{tabular}{|c|c|c|c|c|} \hline
$c=9$ & $c=10$ & $c=11$ & $c=11$ & $c=11$ \\  \hline
$x^4+6x^2+2$& $x^4+2x^2-9$ & $x^4+12x^2+2$ & $x^4+6$ & $x^4+8x+10$  \\ \hline
$x^4-2x^2+2$& $x^4+2x^2-1$ & $x^4+4x^2+18$ & $x^4+22$ & $x^4+8x+6$ \\ \hline
$x^4+6x^2+10$& $x^4+6x^2-9$ & $x^4+12x^2+18$ & $x^4+14$ & $x^4+8x+14$ \\ \hline
$x^4+2x^2+10$& $x^4+6x^2-1$ & $x^4+4x^2+10$ & $x^4+30$ & $x^4+8x^2+8x+22$ \\ \hline
$x^4+2x^2-2$& $x^4-6x^2+3$ & $x^4+12x^2+10$ & $x^4+26$ &   \\ \hline
$x^4-2x^2-2$& $x^4+6x^2+3$ & $x^4+4x^2+14$ & $x^4+10$ &  \\ \hline
$x^4+2x^2+6$& $x^4-2x^2+3$ & $x^4+4x^2+6$ & $x^4+18$ &  \\ \hline
$x^4-2x^2+6$& $x^4+2x^2+3$ & $x^4+12x^2+6$ & $x^4+2$ &  \\ \hline 
 \end{tabular}
\end{center}

\begin{itemize}
\item[$\bullet$] Suppose that the minimal polynomial of $\alpha_{L}$ is of the form $x^{4}+2ax^2+b$.\\ 
Out of the 36 polynomials in the above table, 32 are of this form and they satisfy exactly one of the following
\begin{enumerate}
\item[(i)] $v_{2}(a)=0, v_{2}(b)=1$, \item[(ii)] $v_{2}(a)=0, v_{2}(b)=0$, \item[(iii)] $v_{2}(a)=1, v_{2}(b)=1$. \item[(iv)] $a=0, v_{2}(b)=1$. 
\end{enumerate} In particular, unless $a=0$ i.e, in case (iv), $b/a \in  \Z_{2}$.
Moreover, in case (ii) we have that all the values of $b$ satisfy that $b \equiv -1 \bmod 4.$ The Gram matrix of the trace form in the basis $\{1, \alpha_{L}, \alpha_{L}^{2}, \alpha_{L}^{3}\}$ is given by

$\begin{bmatrix}  4 & 0 & -4a & 0 \\ 0 &  -4a & 0 & 8a^{2}-4b  \\ -4a & 0 & 8a^{2}-4b & 0 \\ 0 & 8a^{2}-4b& 0 & -16a^{3}+12ab  \end{bmatrix} 
\cong\footnote{By this we mean that the quadratic forms having such Gram matrices are equivalent.}  
\begin{bmatrix}  4 & 0 & 0 & 0 \\ 0 &  -4a & 0 & 8a^{2}-4b  \\ 0 & 0 & 4a^{2}-4b & 0 \\ 0 & 8a^{2}-4b& 0 & -16a^{3}+12ab \end{bmatrix}$\\ 
$\cong \begin{bmatrix}  4 & 0 & 0 & 0 \\ 0 &  4a^{2}-4b & 0 & 0  \\ 0 & 0 & -4a & 8a^{2}-4b \\ 0 & 0 & 8a^{2}-4b & -16a^{3}+12ab \end{bmatrix} 
\cong \begin{bmatrix}  4 & 0 & 0 & 0 \\ 0 &  4a^{2}-4b & 0 & 0  \\ 0 & 0 & -4a & -4b \\ 0 & 0 & -4b & -4ab  \end{bmatrix}$.\\ 

The last form is $\Z_{2}$-isomorphic to \[  
 \begin{bmatrix}  4 & 0 & 0 & 0 \\ 0 &  4a^{2}-4b & 0 & 0  \\ 0 & 0 & -4a & 0 \\ 0 & 0 & 0 & -4\frac{b}{a}(a^2-b) \end{bmatrix} \quad \mbox{for $a \neq 0$} \] and to \[\begin{bmatrix}  4 & 0 & 0 & 0 \\ 0 &  -4b & 0 & 0  \\ 0 & 0 & 0 & -4b \\ 0 & 0 & -4b & 0  \end{bmatrix} \quad \mbox{otherwise}.\]

It follows that

\[\Lambda_{2} \cong \begin{cases} \left \langle 2^2,2^{2}(a^2-b),2^{2}(-a),2^{3}(\frac{a^{2}b-b^2}{2a}) \right \rangle & \mbox{ if $v_{2}(a)=0$ and $v_{2}(b)=1$}, \\ \left \langle 2^2,2^{3}\frac{(a^2-b)}{2},2^{2}(-a),2^{3}(\frac{a^{2}b-b^2}{2a}) \right \rangle & \mbox{ if $v_{2}(a)=0$ and $v_{2}(b)=0$},\\ \left \langle 2^2,2^{3}\frac{(a^2-b)}{2},2^{3}(\frac{-a}{2}),2^{3}(\frac{a^{2}b-b^2}{2a}) \right \rangle & \mbox{ if $v_{2}(a)=1$ and $v_{2}(b)=1$},  \\  \left \langle 2^2,2^{3}(\frac{-b}{2}) \right \rangle \oplus 2^{3}\mathbb{H} & \mbox{ if $a=0$ and $v_{2}(b)=1$}. \end{cases}\]

Notice that in case (i), since $v_{2}(a^2) < v_{2}(b)$, we have that $v_{2}(a^2-b)=v_{2}(a^2)=0$ i.e., $a^2-b \in \Z_{2}^{*}.$ Similarly in case (iii) it follows from $v_{2}(b) < v_{2}(a^2)$ that $\frac{(a^2-b)}{2} \in \Z_{2}^{*}$.  In case (ii) we also have that $\frac{(a^2-b)}{2} \in \Z_{2}^{*}$; this follows since in this case all the values of $b$ satisfy that $b \equiv -1 \bmod 4.$   

\item[$\bullet$] If the minimal polynomial of $\alpha_{L}$ is of the form $x^{4}+8x+2b$ with $v_{2}(b)=0$, three out of the remaining four polynomials in the table are of this form, then similarly as above we have that \[ \Lambda_{2} \cong \left \langle 2^2,2^{3}(-b) \right \rangle \oplus 2^{3}\mathbb{H}.\]

\item[$\bullet$] Finally if the minimal polynomial of $\alpha_{L}$ is $x^4+8x^2+8x+22$ then \[ \Lambda_{2} \cong \left \langle 2^2,2^{3}(-3), 2^{3}, 2^{3}\left ( \frac{-773}{3} \right) \right \rangle. \] 

\end{itemize}
 From the description above we see that $\Lambda_{2}$ contains an orthogonal factor $\Lambda^{(2)}$ of one of the following forms:  $2^{3}\mathbb{H}$ or $\langle 2^eu_{1}, 2^eu_{2}, 2^eu_{3}\rangle$ or $\langle 2^eu_{1}, 2^eu_{2}, 2^{e+1}u_{3}\rangle$, where all the $u_{i}'$s are in $\Z_{2}^{*}$.  The result follows from \cite[XI, \S3, Lemma 3.8]{cassels}.

\end{proof}

\begin{theorem}\label{principal}

Let $K$ be a number field of degree at least $3$. Then, the genus of integral trace form $q_{K}$ contains only one proper spinor genus, thus it only contains one spinor genus.

\end{theorem}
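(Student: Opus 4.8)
The plan is to verify the hypothesis of Theorem \ref{mainhelp} at every rational prime $p$, namely that $\theta_p(\Lambda_p) \supseteq \Z_p^* \bmod (\Q_p^*)^2$, and then to read off the conclusion from the chain of containments recorded after the definitions in Section \ref{prelim}. Since $K$ has degree at least $3$, the entire local analysis of the trace lattice $\Lambda = O_K$ inside $V = (K, \mathrm{tr}_{K/\Q}(x^2))$ developed in this section is available.

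First I would dispose of the primes $p$ that are unramified or at worst tamely ramified in $K$: for these the required containment is precisely Theorem \ref{TeoA}. This reduces the problem to the finitely many wild primes, which I would handle by splitting on the residue characteristic and invoking the case analysis already carried out. For $p \geq 5$ the containment is Theorem \ref{over5}; for $p = 3$ it is Theorem \ref{tres}; and for $p = 2$ it is Theorem \ref{dos}. Together with the tame case these exhaust all primes, so $\theta_p(\Lambda_p) \supseteq \Z_p^*$ for every $p$. Theorem \ref{mainhelp} then gives that the genus of $q_K$ contains a single proper spinor genus, that is, $\mathrm{spin}^+(\Lambda) = \mathrm{gen}^+(\Lambda) = \mathrm{gen}(\Lambda)$.

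To upgrade this to the improper statement, I would invoke the containments $\mathrm{spin}^+(\Lambda) \subseteq \mathrm{spin}(\Lambda) \subseteq \mathrm{gen}(\Lambda)$ displayed in the lattice diagram of Section \ref{prelim}: since the two outer terms now coincide, so does the middle one, and hence the genus of $q_K$ also contains a single spinor genus.

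The only genuine difficulty has already been absorbed into the earlier results, so this final step is essentially bookkeeping. The hard part was never located here; it lay in controlling the spinor norm at the small wild primes in low degree, culminating in the explicit treatment of the $36$ totally ramified quartic $2$-adic fields in the proof of Theorem \ref{dos}. With those results in hand no new idea is required: the argument is a clean partition of the primes into the tame, $p \ge 5$, $p = 3$, and $p = 2$ blocks, followed by a citation of the matching statement for each block and a one-line passage from the proper to the improper spinor genus via the containment diagram.
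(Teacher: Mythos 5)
Your proposal is correct and follows essentially the same route as the paper: the paper's proof of Theorem \ref{principal} is a one-line citation of Theorems \ref{over5}, \ref{tres}, \ref{dos} together with the criterion of Theorem \ref{mainhelp}, exactly the partition of primes you describe (your separate appeal to Theorem \ref{TeoA} for tame primes is already subsumed in the statements of those three theorems, whose proofs reduce to it). Your final step, passing from one proper spinor genus to one spinor genus via the containments ${\rm spin}^{+}(\Lambda) \subseteq {\rm spin}(\Lambda) \subseteq {\rm gen}(\Lambda) = {\rm gen}^{+}(\Lambda)$, is the same bookkeeping the paper leaves implicit.
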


\begin{proof}

This follows from Theorems \ref{over5}, \ref{tres}, \ref{dos} and \ref{mainhelp}.

\end{proof}

\begin{corollary}
Let $K$ be a non totally real number field. Then, any integral quadratic form that is equivalent to $q_{K}$ is properly equivalent to it.
\end{corollary}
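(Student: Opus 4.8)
The plan is to translate the statement into the language of classes and spinor genera from Section \ref{prelim} and then to combine Eichler's theorem with Theorem \ref{principal}. Recall that two integral forms are equivalent (resp.\ properly equivalent) exactly when the associated lattices lie in the same class ${\rm cl}(\Lambda)$ (resp.\ proper class ${\rm cl}^{+}(\Lambda)$). Since ${\rm SO}(V)\subseteq {\rm O}(V)$ one always has ${\rm cl}^{+}(\Lambda)\subseteq {\rm cl}(\Lambda)$, so the corollary is equivalent to the reverse inclusion, i.e.\ to the identity ${\rm cl}(\Lambda)={\rm cl}^{+}(\Lambda)$ for the trace lattice $\Lambda=O_{K}$ inside $V=(K,\mathrm{tr}_{K/\Q}(x^2))$.

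First I would check that the hypotheses put us in the indefinite range. Since $[K:\Q]=n\ge 3$ the lattice has rank at least $3$, and since $K$ is not totally real it has at least one pair of complex embeddings. Computing the signature of $\mathrm{tr}_{K/\Q}(x^2)$ over $\R$ via $K\otimes_{\Q}\R\cong \R^{r_{1}}\times \C^{r_{2}}$ (with $r_{1}$ real and $r_{2}$ complex places) gives $(r_{1}+r_{2},\,r_{2})$ with $r_{2}\ge 1$, so $V$ is indefinite of dimension $\ge 3$. This is precisely the setting of Eichler's theorem recorded just after the definition of the spinor genus, which yields ${\rm cl}(\Lambda)={\rm spin}(\Lambda)$ and ${\rm cl}^{+}(\Lambda)={\rm spin}^{+}(\Lambda)$.

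Next I would invoke Theorem \ref{principal}: the genus of $q_{K}$ contains a single proper spinor genus, which, using ${\rm gen}(\Lambda)={\rm gen}^{+}(\Lambda)$, says ${\rm gen}(\Lambda)={\rm spin}^{+}(\Lambda)$. Feeding this into the chain ${\rm spin}^{+}(\Lambda)\subseteq {\rm spin}(\Lambda)\subseteq {\rm gen}(\Lambda)$ from the displayed diagram squeezes the three together, so in particular ${\rm spin}(\Lambda)={\rm spin}^{+}(\Lambda)$. Combining this with the two identities coming from Eichler's theorem gives
\[{\rm cl}(\Lambda)={\rm spin}(\Lambda)={\rm spin}^{+}(\Lambda)={\rm cl}^{+}(\Lambda),\]
which is exactly the equality we want.

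Since every nontrivial ingredient is already available, I do not anticipate a genuine obstacle; the one point deserving care is the indefiniteness hypothesis of Eichler's theorem. Verifying that ``not totally real'' forces a negative entry in the signature is precisely what excludes the totally real case, where the trace form is positive definite, Eichler's theorem does not apply, and the conclusion can genuinely fail.
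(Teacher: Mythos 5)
Your argument for the case $[K:\Q]\ge 3$ is correct and is essentially the paper's own proof: Eichler's theorem turns ${\rm cl}$ and ${\rm cl}^{+}$ into ${\rm spin}$ and ${\rm spin}^{+}$, Theorem \ref{principal} collapses ${\rm spin}^{+}$ onto ${\rm gen}={\rm gen}^{+}$, and the diagram of containments squeezes everything into equality. Your explicit signature computation $(r_{1}+r_{2},\,r_{2})$ with $r_{2}\ge 1$ is a nice touch; the paper leaves the indefiniteness of $V$ implicit when it invokes Eichler.

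There is, however, one gap relative to the statement as the paper intends it: you assume $n\ge 3$ at the outset, but the corollary (as its own proof makes clear) is also meant to cover imaginary quadratic fields, which are non totally real. For $n=2$ your entire mechanism breaks down: the trace lattice has rank $2$, so Eichler's theorem (which requires dimension bigger than $2$) does not apply, and Theorem \ref{principal} (which requires degree at least $3$) is unavailable as well --- indeed the paper shows elsewhere that for quadratic fields ${\rm spin}^{+}(q_{K})$ and ${\rm gen}(q_{K})$ can genuinely differ. The paper handles this case by a separate elementary observation: complex conjugation is an \emph{improper} automorphism of $O_{K}$ preserving the trace form, so any improper isometry onto $q_{K}$ can be composed with it to produce a proper one, giving ${\rm cl}(q_{K})={\rm cl}^{+}(q_{K})$ directly. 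Adding that one sentence would make your proof complete for the full statement.
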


\begin{proof}
The result is clear for $K$ quadratic, just compose with complex conjugation on $K$ whenever the original isometry is not proper. For higher dimensions  we have \[\begin{matrix} {\rm cl}(q_{K}) & \subseteq^{1} & {\rm spin}(q_{K}) & \subseteq & {\rm gen}(q_{K}) \\ \rotatebox[origin=c]{270}{ $\supseteq$ }  & & \rotatebox[origin=c]{270}{ $\supseteq$ } & & \rotatebox[origin=c]{270}{ $=$ }\\  {\rm cl}^{+}(q_{K}) & \subseteq^{2} & {\rm spin}^{+}(q_{K}) & \subseteq^{3} & {\rm gen}^{+}(q_{K}). \end{matrix}\] where contaiments $1$ and $2$ are equalities by the celebrated result of Eichler \cite{eichler} and containment $3$ is an equality by Theorem \ref{principal}. It follows that all the containments in the above diagram are equalities, in particular we have that ${\rm cl}^{+}(q_{K}) ={\rm cl}(q_{K}).$ 

\end{proof}

\section{Quadratic fields}

Our interest in the spinor genus of the trace began as the search for a tool to understand when a pair of number fields have the same integral 
traces. Since quadratic fields are completely characterized by their discriminants the question about the isometry between their traces becomes 
trivial. However, since all the methods applied above relied heavily on the assumption that the number fields have dimension at least 
$3$, it is interesting on its own to see if Theorem \ref{principal} remains valid in dimension $2$. The following example shows that this is not always 
the case. 

\begin{example}
Let $K$ be the quadratic number field of discriminant $17$. Then, its integral  trace form $q_{K}$ is equivalent to the form 
$\langle 2,2,9 \rangle$ which genus contains two spinor genus; $\langle 2,2,9\rangle$ and $\langle1, 17 \rangle$.
\end{example}

\begin{remark}
The above example is minimal in the sense that for any quadratic field $K$ with discriminant $d$ with $|d| <17$ one has that ${\rm spin}(q_{K}) ={\rm gen}(q_{K}).$ 
\end{remark}

It is natural to wonder for which quadratic fields $K$ the conclusion of Theorem \ref{principal} does hold, and for which it does not. A simple way to construct 
quadratic fields that satisfy Theorem \ref{principal} comes from Gauss' genus theory(see \cite[II \S6,\S7]{cox}). Suppose that $\Delta$ is an integer of the form $\Delta=f^2D$, where $f$ is a positive integer and $D$ is a {\it fundamental discriminant} i.e., the discriminant of a quadratic number field. We will denote by $C_{\Delta}$ the {\it narrow class group} of the order of conductor $f$ in $\Q(\sqrt{D})$.

\begin{theorem}[Gauss] Let $\Delta$ be an integer as above. The set of integral primitive binary quadratic forms of discriminant $\Delta$, under proper equivalence, has an structure of abelian group isomorphic to $C_{\Delta}$. Moreover the set of genus forms of discriminant $\Delta$ is also a group, and it isomorphic to $C_{\Delta}/C^{2}_{\Delta}.$
\end{theorem}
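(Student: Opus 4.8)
The statement is classical Gauss composition together with genus theory, and the plan is to realize \emph{both} groups as objects attached to the order $\mathcal{O}_{\Delta} := \Z + f\,\mathcal{O}_{\Q(\sqrt{D})}$ of conductor $f$, whose narrow class group is by definition $C_{\Delta}$. First I would set up the dictionary between forms and modules: to a primitive form $ax^{2}+bxy+cy^{2}$ of discriminant $\Delta$ I associate the $\mathcal{O}_{\Delta}$-module
\[
\mathfrak{a} \;=\; \Z\, a \;+\; \Z\,\frac{-b+\sqrt{\Delta}}{2}.
\]
The key lemma is that primitivity of the form is equivalent to $\mathfrak{a}$ being a proper (invertible) fractional $\mathcal{O}_{\Delta}$-ideal, and that, after fixing an orientation of the $\Z$-basis, every proper ideal arises in this way. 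One then checks that the natural $\mathrm{SL}_{2}(\Z)$-action on forms corresponds exactly to an oriented change of $\Z$-basis of $\mathfrak{a}$, whereas rescaling $\mathfrak{a}$ by a principal ideal $(\lambda)$ with $N(\lambda)>0$ is the defining relation of the \emph{narrow} class group.

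Granting this, proper equivalence classes of primitive forms of discriminant $\Delta$ are in bijection with $C_{\Delta}$. Transporting ideal multiplication through the bijection simultaneously endows the set of form classes with an abelian group law — this \emph{is} Gauss composition — and makes the bijection a group isomorphism onto $C_{\Delta}$, proving the first assertion. The verifications here are standard but somewhat lengthy; the orientation bookkeeping, needed to land in the narrow rather than the ordinary class group and to make composition well defined, is the only delicate point.

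For the second assertion I would introduce the assigned genus characters. To each prime dividing $\Delta$ one attaches a quadratic character on the residues in $(\Z/\Delta\Z)^{*}$ coprime to $\Delta$ that the form represents; collecting them gives a homomorphism $\Phi\colon C_{\Delta}\to\{\pm 1\}^{\mu}$ whose fibres are precisely the genera, since two forms lie in the same genus iff they are $\Z_{p}$-equivalent for every $p$ together with the correct sign behaviour, and this is exactly what the characters detect. Well-definedness of $\Phi$ on classes and multiplicativity are routine once one records that the characters are multiplicative in the integers represented. This already shows that the set of genera is a group, namely $\mathrm{im}\,\Phi \cong C_{\Delta}/\ker\Phi$, a quotient of $C_{\Delta}$. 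Moreover $C_{\Delta}^{2}\subseteq \ker\Phi$ is immediate, because each character is quadratic, so every square has trivial character and lies in the principal genus.

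The hard part will be the reverse inclusion $\ker\Phi\subseteq C_{\Delta}^{2}$, i.e.\ that the principal genus consists only of squares (Gauss's duplication theorem); this is the only deep point, and everything else is formal. I would establish it by a counting argument. On one side, using Dirichlet's theorem on primes in arithmetic progressions to realize admissible character patterns by prime-represented values, one shows that the image of $\Phi$ is an index-two subgroup of $\{\pm 1\}^{\mu}$, so that the number of genera is $2^{\mu-1}$; the single relation cutting out that subgroup is precisely a reformulation of quadratic reciprocity (equivalently the Hilbert-symbol product formula $\prod_{v}(\,\cdot\,,\Delta)_{v}=1$). On the other side, the $2$-rank computation for the narrow class group, via the ambiguous class number formula and independent of the principal genus theorem, gives $[C_{\Delta}:C_{\Delta}^{2}]=2^{\mu-1}$ as well. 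Since $C_{\Delta}^{2}\subseteq\ker\Phi$ and the two indices $[C_{\Delta}:\ker\Phi]$ and $[C_{\Delta}:C_{\Delta}^{2}]$ both equal $2^{\mu-1}$, the subgroups coincide, giving $\ker\Phi=C_{\Delta}^{2}$ and hence the group of genera $\cong C_{\Delta}/C_{\Delta}^{2}$. Thus reciprocity is exactly the input that forces the two halves of the count to agree. Alternatively one can run Gauss's original duplication argument through the theory of ternary quadratic forms, avoiding Dirichlet but at the cost of a longer development.
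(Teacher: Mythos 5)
The paper does not actually prove this theorem: it is quoted as classical background (Gauss), with the proof deferred to the cited reference Cox, \emph{Primes of the form $x^2+ny^2$}, II \S 6--\S 7. Your outline is correct and is essentially that standard argument --- the form/ideal dictionary for the order of conductor $f$, which simultaneously produces Gauss composition and the isomorphism with the narrow class group $C_{\Delta}$, followed by genus characters and the duplication theorem $\ker\Phi = C_{\Delta}^{2}$ obtained by matching two counts ($|\mathrm{im}\,\Phi| = 2^{\mu-1}$ via quadratic reciprocity plus Dirichlet's theorem, and $[C_{\Delta}:C_{\Delta}^{2}] = 2^{\mu-1}$ via the ambiguous class count), which is exactly where the real depth lies. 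The only conventions you leave implicit are that for $\Delta<0$ one must restrict to positive definite forms (otherwise the proper classes do not biject with $C_{\Delta}$), and that the characters attached at $p=2$ depend on $\Delta$ modulo powers of $2$, so $\mu$ must be read as the number of assigned characters rather than the number of prime divisors of $\Delta$; both are standard bookkeeping and do not affect the structure of your argument.
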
  
\noindent Let $K$ be a quadratic number field and let us say for simplicity that the discriminant $d$ of $K$ is odd. The trace form $q_{K}$ is an integral primitive binary quadratic form of discriminant $-4d$ (see lemma below). Notice that $-4d$ is also a fundamental discriminant. If the narrow class group $C_{-4d}$  has exponent at most $2$ then, by Gauss' theorem, for every 
primitive binary quadratic form $q$ of discriminant $-4d$ we have that ${\rm cl^{+}}(q) ={\rm gen}(q)$ thus ${\rm spin^{+}}(q) ={\rm gen}(q).$ In particular, if $C_{-4d}$ has exponent at most $2$ then \[{\rm spin^{+}}(q_{K}) ={\rm gen}(q_{K})\] for every quadratic field of discriminant $d$. Examples of this are given by the quadratic fields $\Q(\sqrt{-3})$, $\Q(\sqrt{5})$ and $\Q(\sqrt{13})$. Using the following lemma one can also construct examples of quadratic fields $K$ with even discriminant for which ${\rm spin}(q) ={\rm gen}(q)$ e.g., $K=\Q(i)$.

Given a square free integer $d$ we let $K_{d}=\Q(\sqrt{d})$.
 \begin{lemma}\label{tracequadratic}
Let $d\neq 1$ be a non-zero square free integer. If $d \equiv 1\pmod{4}$ then the trace form $q_{K_{d}}$ is an integral primitive binary quadratic form of discriminant $-4d$. Otherwise, the form $\frac{1}{2}q_{K_{d}}$ is an integral primitive binary quadratic form of discriminant $-4d$.
\end{lemma}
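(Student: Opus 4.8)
The plan is to pin down an explicit $\Z$-basis of $O_{K_d}$, compute the coefficients of $q_{K_d}$ in that basis, and then read off both the discriminant $-4d$ and primitivity directly. Everything reduces to the two classical shapes of the ring of integers of a quadratic field, so the argument splits along the congruence class of $d$ modulo $4$; since $d$ is square-free and $d \neq 1$, the cases $d \equiv 1 \pmod 4$ and $d \equiv 2,3 \pmod 4$ exhaust all possibilities.

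For $d \equiv 1 \pmod 4$ I would take the basis $\{1,\omega\}$ with $\omega = (1+\sqrt{d})/2$, whose minimal polynomial is $t^2 - t + (1-d)/4$, so that $\omega^2 = \omega + (d-1)/4$. Using $\mathrm{tr}_{K_d/\Q}(1)=2$, $\mathrm{tr}_{K_d/\Q}(\omega)=1$, and hence $\mathrm{tr}_{K_d/\Q}(\omega^2) = (d+1)/2$, expanding $\mathrm{tr}_{K_d/\Q}((x+y\omega)^2)$ gives $q_{K_d}(x,y) = 2x^2 + 2xy + \tfrac{d+1}{2}y^2$, whose discriminant is $2^2 - 4\cdot 2\cdot \tfrac{d+1}{2} = -4d$. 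For $d \equiv 2,3 \pmod 4$ the ring of integers is $\Z[\sqrt d]$, and in the basis $\{1,\sqrt d\}$ the same expansion yields $q_{K_d}(x,y) = 2x^2 + 2d\,y^2$; dividing by $2$ produces $\tfrac12 q_{K_d}(x,y) = x^2 + d\,y^2$, which is integral and again of discriminant $-4d$.

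The only point demanding genuine care, and where the congruence hypothesis is actually used, is primitivity, i.e.\ that the greatest common divisor of the three coefficients equals $1$. In the first case the content is $\gcd\!\left(2,2,\tfrac{d+1}{2}\right)$, and here I would note that $d\equiv 1 \pmod 4$ forces $\tfrac{d+1}{2}$ to be odd, so the content is $1$; in the second case the content of $x^2 + d\,y^2$ is $\gcd(1,0,d)=1$ trivially. Since the discriminant computations are routine expansions, this primitivity check is the main (and essentially the only) obstacle, and it is precisely the step that forces the two separate normalizations ($q_{K_d}$ versus $\tfrac12 q_{K_d}$) in the statement.
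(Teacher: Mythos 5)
Your proof is correct and follows essentially the same route as the paper: both compute the trace form in the standard integral basis of $O_{K_d}$, obtaining $2x^2+2xy+\tfrac{d+1}{2}y^2$ when $d\equiv 1\pmod 4$ and $2x^2+2dy^2$ otherwise, from which the discriminant $-4d$ and primitivity follow. Your write-up is merely more explicit than the paper's (which states the forms without carrying out the trace computations or the $\gcd$ check), and your observation that $d\equiv 1\pmod 4$ makes $\tfrac{d+1}{2}$ odd correctly supplies the primitivity verification the paper leaves implicit.
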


\begin{proof}
Using the usual integral basis for $O_{K_{d}}$  we see that the integral trace form can be written in the form \[ q_{K_{d}} \cong \left \langle 2,2,\frac{1+d}{2} \right \rangle\] whenever $d \equiv 1\pmod{4}$ or as $\displaystyle q_{K_{d}} \cong \langle 2,0, 2d \rangle$ whenever $d \equiv 2,3\pmod{4}$.
\end{proof}

To find sufficient and necessary conditions on a quadratic field $K$ to see when the conclusion of Theorem \ref{principal} is valid over $K$ Gauss' genus theory is not enough. However, the following generalization of Estes and Pall of Gauss' result, see \cite[Corllary 1]{EstesPall}, gives a sufficient criterium to find such $K$'s. 

\begin{theorem}[Estes,Pall] \label{estespall} Let $\Delta$ be an integer as above. The set of proper spinor genera of integral primitive binary quadratic forms of discriminant $\Delta$ forms a group isomorphic to $C_{\Delta}/C^{4}_{\Delta}.$ Moreover, the set of proper spinor genera in a genus is also a group isomorphic to $C^{2}_{\Delta}/C^{4}_{\Delta}.$
\end{theorem}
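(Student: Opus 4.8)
The plan is to translate the three relations---proper class, proper spinor genus, and genus---into the arithmetic of the order $O_{\Delta}$ of conductor $f$ in $E:=\Q(\sqrt{D})$, and then to read off the two indices from Hilbert's Theorem~90 and the idelic description of $C_{\Delta}$. Write $N=N_{E/\Q}$, let $x\mapsto\bar{x}$ denote the nontrivial automorphism of $E$, let $\mathbb{A}_{E}^{*}$ be the id\`eles of $E$, and $\mathbb{A}_{E}^{1}=\ker(N\colon\mathbb{A}_{E}^{*}\to\mathbb{A}^{*})$ the norm-one id\`eles. By the Dedekind correspondence a primitive binary form of discriminant $\Delta$ is, up to proper equivalence and scaling, the norm form of a fractional $O_{\Delta}$-ideal; this identifies the group of proper classes with $C_{\Delta}$ and Gauss composition with multiplication of ideal classes. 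Under this identification the quadratic space is $V=(E,N)$, its special orthogonal group $\mathrm{SO}(V)$ is the norm-one torus $E^{1}$ acting on $V=E$ by multiplication, and $\mathrm{SO}_{\mathbb{A}}(V)=\mathbb{A}_{E}^{1}$.

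Consequently the genus and the proper spinor genus of the principal class become orbits on $C_{\Delta}$: the principal genus is the image in $C_{\Delta}$ of $\mathbb{A}_{E}^{1}$, and the principal proper spinor genus is the image of $\Theta_{\mathbb{A}}(V)$, because the global factor $E^{1}=\mathrm{SO}(V)$ lands in $E^{*}$ and so acts trivially on ideal classes. The first step I would carry out is the spinor-norm computation: by Hilbert~90 every $u\in E^{1}$ is $u=\beta/\bar{\beta}$ with $\theta(u)=N(\beta)\bmod(\Q^{*})^{2}$, and the same holds at each completion, so that
\[
\Theta_{\mathbb{A}}(V)=\bigl\{\,\beta/\bar{\beta}\ :\ \beta\in\mathbb{A}_{E}^{*},\ N(\beta)\in(\mathbb{A}^{*})^{2}\,\bigr\}.
\]
Establishing this formula---by identifying the even Clifford algebra of $V$ with $E$ and chasing the exact sequence that defines $\theta$---is the first technical point.

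The computation then rests on the single identity
\[
[\beta/\bar{\beta}]=[\beta]\,[\bar{\beta}]^{-1}=[\beta]^{2}\quad\text{in }C_{\Delta},
\]
valid because conjugation acts on the narrow class group as inversion ($\mathfrak{b}\bar{\mathfrak{b}}=N(\mathfrak{b})O_{\Delta}$ is generated by a \emph{positive} rational, hence trivial in $C_{\Delta}$); it is here that the narrowness of $C_{\Delta}$ is essential. Letting $\beta$ range freely and using that $\mathbb{A}_{E}^{*}\to C_{\Delta}$ is onto recovers Gauss' theorem in the shape ``principal genus $=C_{\Delta}^{2}$''. Imposing the spinor constraint $N(\beta)\in(\mathbb{A}^{*})^{2}$ forces $\beta\in\mathbb{A}^{*}\mathbb{A}_{E}^{1}$, so $[\beta]\in A\cdot C_{\Delta}^{2}$, where $A$ is the image of the rational id\`eles $\mathbb{A}^{*}\hookrightarrow\mathbb{A}_{E}^{*}$. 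Every class in $A$ is fixed by conjugation, hence equal to its own inverse, so $A\subseteq C_{\Delta}[2]$ and $A^{2}=1$; therefore the principal proper spinor genus equals $(A\,C_{\Delta}^{2})^{2}=C_{\Delta}^{4}$. The reverse inclusion is immediate, since $\beta=\delta^{2}$ has $N(\beta)\in(\mathbb{A}^{*})^{2}$ and yields $[\beta]^{2}=[\delta]^{4}$, so every fourth power occurs.

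With the principal genus identified as $C_{\Delta}^{2}$ and the principal proper spinor genus as $C_{\Delta}^{4}$, both assertions follow: by the translation-invariance of the whole construction under Gauss composition the proper spinor genera are exactly the cosets of $C_{\Delta}^{4}$, so they form the group $C_{\Delta}/C_{\Delta}^{4}$, and the proper spinor genera inside a fixed genus (a coset of $C_{\Delta}^{2}$) are the cosets of $C_{\Delta}^{4}$ lying in it, namely $C_{\Delta}^{2}/C_{\Delta}^{4}$. The main obstacle I anticipate is not the group theory but making the dictionary exact: pinning down the even Clifford algebra and the local spinor norms at \emph{every} completion, and---since $C_{\Delta}$ is the narrow class group of a possibly non-maximal order---correctly handling the archimedean places and the primes dividing the conductor $f$, where both the local units $O_{\Delta,p}^{*}$ and the surjectivity of $\mathbb{A}_{E}^{*}\to C_{\Delta}$ require care.
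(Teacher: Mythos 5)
Your proposal addresses a statement that the paper never actually proves: Theorem \ref{estespall} is imported as a known result of Estes and Pall, the only justification being the citation \cite{EstesPall}, so any honest proof is necessarily a different route from the paper's. Your sketch is essentially correct and is the natural idelic rendering of the classical argument: identify proper classes of primitive forms of discriminant $\Delta$ with the narrow Picard group $C_{\Delta}$, realize $\mathrm{SO}(V)$ as the norm-one torus $E^{1}$ acting by multiplication, compute that multiplication by $\beta/\bar{\beta}$ has spinor norm $N(\beta)$ modulo squares (this is right: a product of two reflections $\tau_{v}\tau_{w}$ is multiplication by $v\bar{w}/\overline{v\bar{w}}$ and has spinor norm $N(v)N(w)=N(v\bar{w})$), and then use that conjugation acts as inversion on $C_{\Delta}$ to convert the orbit computations into ``principal genus $=C_{\Delta}^{2}$'' and ``principal proper spinor genus $=C_{\Delta}^{4}$,'' from which both isomorphisms follow by translation under composition. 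A virtue of your route is that it lives directly in the adelic language in which the paper defines $\mathrm{spin}^{+}$, whereas citing \cite{EstesPall} requires translating their classical form-theoretic setup into that framework. Two points need tightening, though neither is fatal. First, your claim that the global factor $\mathrm{SO}(V)=E^{1}$ ``acts trivially on ideal classes'' because it lands in $E^{*}$ is too quick for the \emph{narrow} group: a principal ideal $(u)$ is narrowly trivial only if it admits a totally positive generator; here this holds because $N(u)=1>0$ forces $u$ or $-u$ to be totally positive, and this sign argument should be said. Second, your subgroup $A$ (the image of the rational id\`eles in $C_{\Delta}$) is in fact trivial, not merely $2$-torsion: a rational id\`ele produces an ideal $mO_{\Delta}$ with $m$ a positive rational, which is narrowly principal; your weaker statement $A^{2}=1$ suffices for the computation, so this is a redundancy rather than an error. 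The remaining caveats you flag yourself (local spinor norms at conductor and archimedean places, exactness of the form--ideal dictionary for non-maximal orders) are genuine but standard, and they are exactly the points a full write-up would need to carry out.
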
 Simply put, proper spinor genus and genus coincide as long as $C_{\Delta}$ does not have elements of order $4$ i.e., as long as $C_{\Delta}$ has trivial $4$-rank.

\begin{definition}
Let $G$ be a finitely generated abelian group. Let $p$ be a prime and let $n$ be a positive integer. The {\it $p^{n}$-rank of the group $G$} is the non-negative integer defined by \[{\rm rk}_{p^{n}}(G)={\rm dim}_{\mathbb{F}_{p}}(G^{p^{(n-1)}} \otimes_{\Z} \mathbb{F}_{p}).\]  
\end{definition}

\begin{proposition}\label{4ranktheorem}
Let $d\neq 1$ be a non-zero square free integer and let $G_{d}$ be the narrow class group $C_{-4d}$. Then, \[{\rm spin^{+}}(q_{K_{d}}) ={\rm gen}(q_{K_{d}}) \Longleftrightarrow {\rm rk}_{4}(G_{d})=0.\]
\end{proposition}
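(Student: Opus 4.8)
The plan is to translate the lattice identity $\mathrm{spin}^{+}(q_{K_{d}})=\mathrm{gen}(q_{K_{d}})$ into a count of proper spinor genera inside a single genus of binary forms, and then read that count off Theorem \ref{estespall}. First I would record the general principle underlying the problem: since $\Theta_{\mathbb{A}}(V)\subseteq {\rm SO}_{\mathbb{A}}(V)$, every proper spinor genus is contained in a proper genus, and the proper spinor genera contained in a fixed proper genus partition it (they are the orbits of a subgroup inside a larger orbit). Because ${\rm gen}(\Lambda)={\rm gen}^{+}(\Lambda)$ for any lattice, the genus of $\Lambda=O_{K_{d}}$ is partitioned by the proper spinor genera it contains. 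As $\mathrm{spin}^{+}(\Lambda)\subseteq\mathrm{gen}(\Lambda)$ is always one of these pieces, we get the clean equivalence: $\mathrm{spin}^{+}(\Lambda)=\mathrm{gen}(\Lambda)$ holds if and only if the genus of $\Lambda$ consists of exactly one proper spinor genus. This reduces the Proposition to counting the proper spinor genera in the genus of $q_{K_{d}}$.

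Next I would pass from the lattice $O_{K_{d}}$, with form $q_{K_{d}}$, to a primitive integral binary form to which Estes--Pall applies. By Lemma \ref{tracequadratic}, $q_{K_{d}}$ itself (when $d\equiv 1\pmod 4$) or $\tfrac{1}{2}q_{K_{d}}$ (when $d\equiv 2,3\pmod 4$) is a primitive integral binary quadratic form of discriminant $\Delta:=-4d$. The key point is that rescaling the quadratic form on $V$ by a nonzero constant $c$ leaves all the relevant groups untouched: ${\rm O}(V)$ and ${\rm SO}(V)$, and their adelic versions, are defined by preserving $Q$, hence equally by preserving $cQ$; and writing $\sigma\in {\rm SO}(V_{p})$ as a product of an even number of reflections $\tau_{v_{1}}\cdots\tau_{v_{2k}}$, its spinor norm $\prod_{i}Q(v_{i})$ is multiplied under $Q\mapsto cQ$ by $c^{2k}$, a perfect square, so $\theta_{p}$ and therefore $\Theta_{\mathbb{A}}(V)$ are unchanged. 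Consequently the orbits defining ${\rm spin}^{+}$ and ${\rm gen}$ of the fixed lattice $O_{K_{d}}\subseteq V$ are literally identical whether $V$ carries $q_{K_{d}}$ or its rescaling, so the number of proper spinor genera in the genus is the same for both. Thus I may work with the primitive binary form of discriminant $-4d$ and invoke the classical dictionary identifying its form-theoretic genus and proper spinor genus with the adelic orbit notions of the Definition.

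With this reduction in place, Theorem \ref{estespall} states that the set of proper spinor genera lying in a fixed genus of primitive binary forms of discriminant $-4d$ is a group isomorphic to $C_{-4d}^{2}/C_{-4d}^{4}$. Hence the genus of $q_{K_{d}}$ is a single proper spinor genus precisely when this group is trivial, i.e. when $C_{-4d}^{2}=C_{-4d}^{4}$. Finally I would unwind the definition of the $4$-rank: with $G_{d}=C_{-4d}$,
\[
{\rm rk}_{4}(G_{d})=\dim_{\mathbb{F}_{2}}\!\big(G_{d}^{2}\otimes_{\Z}\mathbb{F}_{2}\big)=\dim_{\mathbb{F}_{2}}\!\big(C_{-4d}^{2}/C_{-4d}^{4}\big),
\]
so $C_{-4d}^{2}/C_{-4d}^{4}$ is trivial if and only if ${\rm rk}_{4}(G_{d})=0$. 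Chaining the equivalences from the three previous steps gives $\mathrm{spin}^{+}(q_{K_{d}})=\mathrm{gen}(q_{K_{d}})\Longleftrightarrow {\rm rk}_{4}(G_{d})=0$.

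I expect the step needing the most care to be the rescaling: one must be sure that replacing $q_{K_{d}}$ by $\tfrac{1}{2}q_{K_{d}}$, which is forced when $d\equiv 2,3\pmod 4$ so that Estes--Pall applies to a \emph{primitive} form of discriminant $-4d$, does not disturb the genus/proper-spinor-genus bookkeeping. The parity observation above — that a scalar enters the spinor norm on ${\rm SO}$ an even number of times and hence as a square — is the cleanest justification and shows the two descriptions agree on the nose, not merely up to a bijection. A secondary, purely routine, point is the standard identification of the adelic orbit definitions with the classical genus and proper spinor genus of binary forms used implicitly in Estes--Pall.
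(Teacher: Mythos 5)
Your proof is correct and follows essentially the same route as the paper, whose entire proof is the one-line observation that the claim "follows clearly from Lemma \ref{tracequadratic} and Theorem \ref{estespall}." Your write-up simply makes explicit the details the paper leaves implicit — in particular the invariance of the spinor norm (hence of the genus/proper-spinor-genus partition) under rescaling $q_{K_{d}}\mapsto \tfrac{1}{2}q_{K_{d}}$, needed when $d\equiv 2,3\pmod 4$, and the identification $G_{d}^{2}\otimes_{\Z}\mathbb{F}_{2}\cong C_{-4d}^{2}/C_{-4d}^{4}$ — and these justifications are accurate.
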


\begin{proof}
This follows clearly from Lemma \ref{tracequadratic} and Theorem \ref{estespall}.
\end{proof}

\begin{corollary}
There are infinitely many quadratic fields, in fact a positive proportion of them, that do not satisfy the conclusion of Theorem \ref{principal}. 
\end{corollary}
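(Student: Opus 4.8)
By Proposition \ref{4ranktheorem}, for a square-free $d\neq 1$ the field $K_d$ \emph{fails} the conclusion of Theorem \ref{principal} — i.e. ${\rm spin^{+}}(q_{K_d})\neq {\rm gen}(q_{K_d})$ — exactly when ${\rm rk}_{4}(G_d)\geq 1$, where $G_d=C_{-4d}$. Since distinct square-free integers give distinct quadratic fields, the plan is to reduce the statement to the purely arithmetic assertion that a positive proportion of square-free $d$ satisfy ${\rm rk}_{4}(C_{-4d})\geq 1$. The corollary then follows verbatim from this density statement together with Proposition \ref{4ranktheorem}.

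First I would record the dichotomy dictated by the sign of $d$: when $d>0$ the group $C_{-4d}$ is a class group of an order in the imaginary quadratic field $\mathbb{Q}(\sqrt{-d})$, whereas for $d<0$ it is the narrow class group of an order in the real quadratic field $\mathbb{Q}(\sqrt{-d})$. This bookkeeping matters because the two families obey different $4$-rank distributions (which is ultimately the source of the distinct constants $29\%$ and $58\%$ appearing in the real and imaginary cases). For the present qualitative claim, however, it is enough to produce a positive density in \emph{either} family, so one may work with whichever case is more convenient.

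The core input is the density theorem for the $4$-rank of class groups of quadratic fields: the proportion of admissible discriminants $-4d$ for which ${\rm rk}_{4}(C_{-4d})=0$ is an explicit constant \emph{strictly less than} $1$. Granting this, the complementary set of $d$ with ${\rm rk}_{4}(C_{-4d})\geq 1$ has positive density, and reinterpreting this through the correspondence $d\mapsto -4d$ yields a positive proportion of quadratic fields $K_d$ with ${\rm spin^{+}}(q_{K_d})\neq {\rm gen}(q_{K_d})$, which is precisely the failure of Theorem \ref{principal}. The sharp values of these densities, and the separation into the real and imaginary subfamilies, are exactly what Theorem \ref{teoremacaso2} will quantify; here one only extracts their positivity.

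The main obstacle is supplying this density input in a form that genuinely captures a positive proportion. One is tempted to argue by elementary genus theory: prescribing the prime divisors of $-4d$ so that the R\'edei--Reichardt matrix is rank-deficient forces a nontrivial element of order $4$ in $C_{-4d}$, hence ${\rm rk}_{4}\geq 1$. But any construction that fixes the number of ramified primes describes only a density-zero family of $d$, since integers with a bounded number of prime factors are sparse. Passing to a \emph{positive} proportion therefore requires controlling the rank of the R\'edei matrix on average over all admissible prime factorizations of $-4d$, and it is exactly this averaged control — the analytic heart of the matter — that must be imported. Once it is in hand, the corollary is immediate.
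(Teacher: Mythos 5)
Your proposal is correct, but it takes a different route from the paper. The paper's own proof is an explicit, density-zero construction: for every prime $p \equiv 1 \bmod 8$ one has ${\rm rk}_{4}(C_{-4p})\ge 1$ by a result of Soundararajan \cite[Proposition 2]{Sound}, so Proposition \ref{4ranktheorem} gives ${\rm spin^{+}}(q_{K_{p}}) \neq {\rm gen}(q_{K_{p}})$ for the infinite family $K_{p}=\Q(\sqrt{p})$; the positive-proportion claim is then really a byproduct of Theorem \ref{teoremacaso2}, proved afterwards via Fouvry--Kl\"uners. You instead invoke the Fouvry--Kl\"uners density theorem (Theorem \ref{FK}, i.e.\ \cite[Theorem 1.1]{FK2}) directly: since the proportion of square-free $d$ with ${\rm rk}_{4}(C_{-4d})=0$ is an explicit constant strictly less than $1$, the complement has positive density, and Proposition \ref{4ranktheorem} converts this into a positive proportion of failures. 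Interestingly, your own criticism of ``fix the ramified primes'' constructions applies verbatim to the paper's proof: primes $p\equiv 1 \bmod 8$ form a density-zero set, so the paper's argument as written yields only infinitude, whereas your argument actually delivers the stated positive proportion in one step --- at the cost of importing the deep analytic input up front, which the paper defers to the proof of Theorem \ref{teoremacaso2}. Conversely, the paper's route buys an elementary, completely explicit family of counterexamples independent of Fouvry--Kl\"uners. One small gap in your write-up: for $d \equiv 3 \pmod 4$ the integer $-4d$ is not fundamental, so passing between $C_{-4d}$ and the class group data to which Theorem \ref{FK} applies requires the comparison ${\rm rk}_{4}(C_{-4d})={\rm rk}_{4}(C_{-d})$; this is exactly the content of Lemma \ref{3mod4disc}, which your phrase ``reinterpreting this through the correspondence $d\mapsto -4d$'' glosses over.
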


\begin{proof}
Let $p$ be a prime such that $p \equiv 1 \pmod 8$, and let $K_{p}=\Q(\sqrt{p})$. Since \[{\rm rk}_{4}(G_{p})\ge 1\] (see \cite[Proposition 2]{Sound}) we have by Proposition \ref{4ranktheorem} that  \[{\rm spin^{+}}(q_{K_{p}}) \neq {\rm gen}(q_{K_{p}}).\] \end{proof}

Similarly it can be shown that there is a positive proportion of quadratic fields for which Theorem \ref{principal} is valid. An interesting problem arising from this is whether such proportions can be made explicit. We formalize this question as follows: Let $T(X)$ be the number of square free integers $d$ such that $|d| <X$ and such that the conclusion of Theorem \ref{principal} is valid for $K_{d}$. Let $N(X)$ be the number of square free integers $d$ with $|d| <X$.

\begin{question}
Does the limit \[\lim_{X \to \infty }\frac{T(X)}{N(X)}\] exist? and if so, what is its value?
\end{question}

Going further one could even ask what proportion of real, resp. complex, quadratic fields satisfy the conclusion of Theorem \ref{principal}. We answer all these questions explicitly:

\begin{theorem}\label{teoremacaso2} Let $T^{\pm}(X)$ be the number of square free integers $d$ such that $0<\pm d <X$ and such that the conclusion of Theorem \ref{principal} is valid for $K_{d}$. The values $N^{\pm}(X)$ are defined in a similar fashion. The following limits exist \[ \alpha=\lim_{X \to \infty }\frac{T(X)}{N(X)} \ , \alpha^{+}=\lim_{X \to \infty }\frac{T^{+}(X)}{N^{+}(X)}  \ \mbox{and} \ \alpha^{-}= \lim_{X \to \infty }\frac{T^{-}(X)}{N^{-}(X)}.\] Furthermore, if $\phi(q)$ denoted the Euler $q$-series $\displaystyle \prod_{n \ge 1}(1-q^{n})$ then \[ \alpha^{+}=\phi(1/2) \approx 0.29, \ \alpha^{-}=2\phi(1/2) \approx 0.58\ \mbox{and} \ \alpha= \frac{\alpha^{+}+ \alpha^{-}}{2}\approx 0.43.\]
\end{theorem}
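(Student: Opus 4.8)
The plan is to reduce the statement, via Proposition~\ref{4ranktheorem}, to a density computation for the $4$-rank of narrow class groups, and then to feed it into the known distribution of $4$-ranks of quadratic class groups established by Fouvry and Kl\"uners. By Proposition~\ref{4ranktheorem}, a squarefree $d \neq 1$ satisfies the conclusion of Theorem~\ref{principal} exactly when ${\rm rk}_{4}(G_{d}) = 0$, where $G_{d} = C_{-4d}$. Hence $T^{\pm}(X)$ counts squarefree $d$ of a fixed sign with ${\rm rk}_{4}(C_{-4d}) = 0$, and the whole theorem becomes the assertion that the natural density of such $d$, among all squarefree integers of the corresponding sign, equals $\phi(1/2)$ for $d > 0$ and $2\phi(1/2)$ for $d < 0$.

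First I would pin down the group $C_{-4d}$ according to the residue of $d$ modulo $4$. Writing $-4d = f^{2}D$ with $D$ a fundamental discriminant, one checks that $f = 1$ when $d \equiv 1, 2 \pmod 4$, so that $C_{-4d}$ is the narrow class group of the maximal order of $\Q(\sqrt{-d})$, while $f = 2$ when $d \equiv 3 \pmod 4$, so that $C_{-4d}$ is the narrow class group of the order of conductor $2$. The decisive point is that the sign is reversed: for $d > 0$ the group $C_{-4d}$ is an \emph{imaginary} quadratic class group, whereas for $d < 0$ it is the narrow class group of a \emph{real} quadratic field. This reversal is precisely what swaps the two densities relative to the signature of $K_{d}$. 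In all cases ${\rm rk}_{4}(C_{-4d})$ is computed by R\'edei's theory as the $\mathbb{F}_{2}$-corank of a R\'edei matrix attached to the primes dividing $-4d$; the limiting value $\phi(1/2) = \prod_{n \ge 1}(1 - 2^{-n})$ is exactly the probability that a large uniform square matrix over $\mathbb{F}_{2}$ is nonsingular, which is what makes it the natural candidate for the density in the imaginary case.

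Next I would remove the non-maximal case $d \equiv 3 \pmod 4$ using the standard exact sequence relating the class group of the order of conductor $2$ to that of the maximal order of $\Q(\sqrt{-d})$. For $d > 0$ (the imaginary case) the field $\Q(\sqrt{-d})$ has $-d \equiv 1 \pmod 4$, its unit group is $\{\pm 1\}$ outside finitely many exceptions, and the kernel of the surjection $C_{-4d} \twoheadrightarrow {\rm Cl}(\Q(\sqrt{-d}))$ has odd order; thus the two $2$-Sylow subgroups agree and ${\rm rk}_{4}(C_{-4d}) = {\rm rk}_{4}({\rm Cl}(\Q(\sqrt{-d})))$, reducing cleanly to fundamental discriminants. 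The case $d < 0$ is the delicate one and I expect it to be the main obstacle: here $\Q(\sqrt{-d})$ is real, and passing between the narrow class group of the order of conductor $2$ and that of the maximal order can alter the $2$-part through the fundamental unit and the sign (narrow versus wide) of its norm. One must show that the $4$-rank condition nonetheless transfers on average, so that the family $d \equiv 3 \pmod 4$ contributes the same limiting density as the maximal families.

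Finally, with the count reduced to $4$-ranks of class groups indexed by fundamental discriminants in fixed congruence classes and of fixed sign, I would invoke the Fouvry--Kl\"uners distribution theorems, which hold uniformly in such congruence conditions. They give density $\prod_{n \ge 1}(1 - 2^{-n}) = \phi(1/2)$ for ${\rm rk}_{4} = 0$ in the imaginary case and $2\phi(1/2)$ in the real (narrow) case, yielding $\alpha^{+} = \phi(1/2)$ and $\alpha^{-} = 2\phi(1/2)$. The global density then follows formally: since $d$ is squarefree if and only if $-d$ is, one has $N^{+}(X) = N^{-}(X)$, and therefore
\[
\alpha = \lim_{X \to \infty} \frac{T^{+}(X) + T^{-}(X)}{N^{+}(X) + N^{-}(X)} = \frac{\alpha^{+} + \alpha^{-}}{2} = \tfrac{3}{2}\,\phi(1/2) \approx 0.43 .
\]
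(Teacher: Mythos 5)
Your reduction follows the same route as the paper: Proposition \ref{4ranktheorem} turns the statement into a density count for ${\rm rk}_{4}(C_{-4d})=0$, your splitting by $d \bmod 4$ (with the sign reversal: $d>0$ gives an imaginary class group, $d<0$ a real narrow one) is exactly Lemma \ref{3mod4disc} and Proposition \ref{asymp}, and the endgame is the same appeal to Fouvry--Kl\"uners (Theorem \ref{FK}), followed by the formally correct observation that $N^{+}(X)=N^{-}(X)$ forces $\alpha=(\alpha^{+}+\alpha^{-})/2$. But there is a genuine gap precisely where you say you ``expect the main obstacle'': the case $d<0$, $d\equiv 3 \pmod{4}$, where $C_{-4d}$ is the \emph{narrow} class group of the conductor-$2$ order $O$ in the \emph{real} field $\Q(\sqrt{-d})$. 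You prove nothing here; you only assert that one must show the $4$-rank condition ``transfers on average.'' Without this step you have no asymptotic for $T^{-}_{3}(X)$, so $\alpha^{-}$ (and hence $\alpha$) is not established; moreover the on-average formulation you gesture at is itself unproven and would be harder to set up than the exact statement.

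The paper closes this gap with an exact transfer, valid for every such $d$: ${\rm rk}_{4}(C_{-4d})={\rm rk}_{4}(C_{-d})$ (Lemmas \ref{D}, \ref{3mod4disc} and Proposition \ref{D2}). The key arithmetic input is that the unit index $m=\#(O_{K}^{*}/O^{*})$ injects into $(O_{K}/2O_{K})^{*}/(O/2O_{K})^{*}$, a group of order dividing $3$ because $2$ is unramified in a field of odd discriminant; hence $m$ is odd, the fundamental unit of $O$ maps to an \emph{odd} power of the fundamental unit of $O_{K}$, and the norm-sign obstruction you worry about cannot occur: the natural map $P(O)/P^{+}(O) \to P(O_{K})/P^{+}(O_{K})$ is an isomorphism. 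Feeding this into the snake lemma for the two exact sequences relating $Pic^{+}$ and $Pic$ shows that the $2$-primary part of the narrow Picard group of $O$ agrees with that of $C\ell^{+}(K)$ if and only if the same holds for the ordinary groups; and the ordinary groups have isomorphic $2$-parts because $\#Pic(O)=3^{\epsilon}\,\#C\ell(K)$ with $\epsilon \in \{0,1\}$, an odd index. (Your treatment of the imaginary case $d>0$, $d\equiv 3\pmod 4$ via the odd-order kernel is correct and is also what the paper does.) With this lemma supplied, the rest of your argument goes through and coincides with the paper's proof.
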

\noindent In other words, $43\%$ (resp. $29\%$, resp. $58\%$) of quadratic (resp. real quadratic, resp. imaginary quadratic) fields  satisfy the conclusion of Theorem \ref{principal}.

\subsection{Proof of Theorem \ref{teoremacaso2}}
Before proving Theorem \ref{teoremacaso2} we need some preliminary results. The main tool behind the above theorem is the recent work of Fouvry and Kl\"uners. For details see \cite{FK}, and more specifically \cite[Theorem 1.1]{FK2}.
\begin{theorem}[Fouvry, Kl\"uners]\label{FK}
Suppose that in the following $D$ runs over fundamental discriminants. Then,
\begin{align*}
& \left | \{0< -D < X,  D \equiv 12 \bmod{16}, \ {\rm rk}_{4}(C_{D})=0 \} \right | &=& \ {\phi(1/2)\left(\frac{1}{2\pi^2}X +o(X)\right)} \\
& \left | \{ 0<  D < X,  D \equiv 12 \bmod{16}, \ {\rm rk}_{4}(C_{D})=0 \} \right | &=& \ {2\phi(1/2)\left(\frac{1}{2\pi^2}X +o(X)\right)}\\
& \left | \{ 0< -D < X,  D \equiv 8 \bmod{16}, \ {\rm rk}_{4}(C_{D})=0 \} \right | &= & \ {\phi(1/2)\left(\frac{1}{2\pi^2}X +o(X)\right)}\\
& \left | \{ 0<  D < X,  D \equiv 8 \bmod{16}, \ {\rm rk}_{4}(C_{D})=0 \} \right | &=& \ {2\phi(1/2)\left(\frac{1}{2\pi^2}X +o(X)\right)} \\
& \left | \{ 0< -D < X,  D \equiv 1 \bmod{4}, \ {\rm rk}_{4}(C_{D})=0 \} \right | &=& \ {\phi(1/2)\left(\frac{2}{\pi^2}X +o(X)\right)}\\
& \left | \{ 0<  D  < X,  D \equiv 1 \bmod{4}, \ {\rm rk}_{4}(C_{D})=0 \} \right | &=& \ {2\phi(1/2)\left(\frac{2}{\pi^2}X +o(X)\right).}
\end{align*}
\end{theorem}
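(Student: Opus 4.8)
The plan is to recognize the statement as a repackaging of the density theorem of Fouvry and Kl\"uners \cite{FK2}, obtained by multiplying, in each of the six cases, the total number of fundamental discriminants in the relevant congruence class by the density of those having trivial $4$-rank. Accordingly I would first isolate the elementary counting ingredient and then quote Fouvry--Kl\"uners for the arithmetic density.

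First I would count the fundamental discriminants in each congruence class. Recall that a fundamental discriminant is either an odd squarefree integer $D \equiv 1 \bmod 4$, or of the form $D = 4m$ with $m$ squarefree and $m \equiv 2, 3 \bmod 4$; in the latter case $m \equiv 3 \bmod 4$ gives $D \equiv 12 \bmod 16$, while $m \equiv 2 \bmod 4$, say $m = 2m'$ with $m'$ odd squarefree, gives $D = 8m' \equiv 8 \bmod 16$. Using that the squarefree integers have density $6/\pi^2$ and equidistribute across the admissible residues modulo $4$ (so odd squarefree integers have density $4/\pi^2$, split evenly between the classes $1$ and $3 \bmod 4$, each of density $2/\pi^2$), a direct computation gives the number of fundamental discriminants up to $X$ as $\sim \frac{2}{\pi^2}X$ for $D \equiv 1 \bmod 4$, and, after rescaling the variable by $X/4$ resp.\ $X/8$, as $\sim \frac{1}{2\pi^2}X$ for each of $D \equiv 12 \bmod 16$ and $D \equiv 8 \bmod 16$. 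These are exactly the base counts appearing on the right-hand sides.

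Next I would invoke Fouvry--Kl\"uners for the density factor. Their work establishes the Cohen--Lenstra--Gerth prediction for the $4$-rank: among fundamental discriminants lying in each of these fixed $2$-adic congruence classes, the proportion with ${\rm rk}_4(C_D) = 0$ tends to $\phi(1/2) = \prod_{n \geq 1}(1 - 2^{-n})$ in the imaginary case $D < 0$ (i.e.\ $0 < -D$), and to $2\phi(1/2)$ in the real case $D > 0$, the factor of $2$ reflecting the extra contribution of the fundamental unit in Gerth's real-quadratic heuristic. Multiplying each base count from the previous step by the appropriate density yields the six asymptotics verbatim.

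The only genuine difficulty is concentrated in the density input, which is not reproved here: the Fouvry--Kl\"uners estimate rests on R\'edei's reciprocity for the governing $\mathbb{F}_2$-matrix, on delicate character-sum and double-oscillation bounds of large-sieve type, and on a careful separation of the even and odd parts of the discriminant, which is exactly what permits the density to be taken uniformly within each congruence class. Granting \cite[Theorem 1.1]{FK2}, the remaining work is pure bookkeeping---matching their normalization of the counting function to the three congruence classes and confirming the elementary squarefree counts above---so the proof reduces to the product (congruence-class count) $\times$ (density of ${\rm rk}_4 = 0$).
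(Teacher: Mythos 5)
Your proposal is correct and takes essentially the same route as the paper: the paper offers no independent proof of this statement, importing it wholesale as \cite[Theorem 1.1]{FK2}, and your reduction to that citation---together with the elementary verification that fundamental discriminants have density $\frac{2}{\pi^2}$ in the class $D \equiv 1 \bmod 4$ and $\frac{1}{2\pi^2}$ in each of the classes $D \equiv 8, 12 \bmod 16$, with the factor $\phi(1/2)$ (resp.\ $2\phi(1/2)$) for $D<0$ (resp.\ $D>0$)---is exactly the intended reading of the right-hand sides. Nothing further is needed.
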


Given $K$ a number field and $O$ an order in it we denote by $P(O)$ (resp. $P^{+}(O)$) the group of principal fractional $O$-ideals (resp. the group of of totally positive principal fractional $O$-ideals).

\begin{lemma}\label{D}
Let K be a real quadratic number field of odd discriminant and let $O$ be the order of conductor $2$ in $O_{K}$. Then the natural map  \begin{eqnarray*} \alpha: P(O)/P^{+}(O) & \to & P(O_{K})/P^{+}(O_{K}) \\ \ class(x O) & \mapsto & class(x O_{K}) \end{eqnarray*} is an isomorphism.
\end{lemma}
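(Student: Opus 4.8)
The plan is to translate the statement about principal and totally positive principal ideals into a statement about the sign patterns of units, where it becomes transparent. Write $\mathrm{sgn}\colon K^{*}\to\{\pm1\}^{2}$ for the homomorphism recording the signs of the two real embeddings of $K$; its kernel is the group $K^{*,+}$ of totally positive elements, and it is surjective because $K$ is real quadratic. Since a principal fractional ideal is totally positive exactly when it has a totally positive generator, identifying a principal ideal $xO$ with the class of $x$ in $K^{*}/O^{*}$ yields $P(O)/P^{+}(O)\cong K^{*}/(K^{*,+}O^{*})\cong \{\pm1\}^{2}/\mathrm{sgn}(O^{*})$, and likewise $P(O_{K})/P^{+}(O_{K})\cong\{\pm1\}^{2}/\mathrm{sgn}(O_{K}^{*})$. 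Under these identifications $\alpha$ is induced by the identity on $\{\pm1\}^{2}$, since $xO$ and $xO_{K}$ share the generator $x$; thus $\alpha$ is the natural projection attached to the inclusion $\mathrm{sgn}(O^{*})\subseteq\mathrm{sgn}(O_{K}^{*})$, which holds because $O^{*}\subseteq O_{K}^{*}$.

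With this description, surjectivity of $\alpha$ is immediate: the class of $yO_{K}$ is the image of the class of $yO$. Injectivity is equivalent to the equality $\mathrm{sgn}(O^{*})=\mathrm{sgn}(O_{K}^{*})$, i.e. to the claim that every unit of $O_{K}$ has the same sign pattern as some unit of $O$. Writing $O_{K}^{*}=\langle -1\rangle\times\langle\varepsilon\rangle$ for a fundamental unit $\varepsilon$, and using $-1\in O$, it suffices to exhibit an \emph{odd} integer $m$ with $\varepsilon^{m}\in O$: then $\varepsilon^{m}\in O^{*}$ and $\mathrm{sgn}(\varepsilon^{m})=\mathrm{sgn}(\varepsilon)^{m}=\mathrm{sgn}(\varepsilon)$, since an odd power leaves each sign unchanged, so $\mathrm{sgn}(O^{*})$ contains both $\mathrm{sgn}(-1)$ and $\mathrm{sgn}(\varepsilon)$ and therefore equals $\mathrm{sgn}(O_{K}^{*})$.

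The main step is to produce such an $m$, which I would do by reducing modulo the conductor. As $O=\Z+2O_{K}$, a unit of $O_{K}$ lies in $O$ precisely when its image in $O_{K}/2O_{K}$ lands in the prime subring (the image of $\Z$, isomorphic to $\mathbb{F}_{2}$), and for a unit this forces it to reduce to $1$. Because $K$ has odd discriminant the prime $2$ is unramified, so $O_{K}/2O_{K}$ is either $\mathbb{F}_{2}\times\mathbb{F}_{2}$ (when $2$ splits) or $\mathbb{F}_{4}$ (when $2$ is inert); in both cases the group $(O_{K}/2O_{K})^{*}$ has odd order, namely $1$ or $3$. Hence the image of $\varepsilon$ has odd order $m$, giving $\varepsilon^{m}\equiv 1 \pmod{2O_{K}}$ and so $\varepsilon^{m}\in\Z+2O_{K}=O$ with $m$ odd, as needed. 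The only genuine obstacle is this unit-index computation; once the order of $(O_{K}/2O_{K})^{*}$ is seen to be odd, the parity argument on signs completes the proof.
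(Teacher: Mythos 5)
Your proof is correct, and at its arithmetic core it runs on the same fuel as the paper's: the conductor of $O$ is $2O_{K}$, and since $2$ is unramified (odd discriminant) the group $(O_{K}/2O_{K})^{*}$ has order $1$ or $3$, hence odd --- this oddness is what both arguments ultimately exploit. Where you differ is the surrounding framework, and your framework is genuinely different. The paper argues structurally: $P(\widetilde{O})/P^{+}(\widetilde{O})$ is trivial or of order $2$ according to the sign of the norm of the fundamental unit, the map $\alpha$ relates the fundamental unit $\epsilon_{O}$ to $\epsilon_{O_{K}}^{m}$ with $m=[O_{K}^{*}:O^{*}]$, and oddness of $m$ is obtained by citing Neukirch's injection $O_{K}^{*}/O^{*}\hookrightarrow (O_{K}/\mathcal{F})^{*}/(O/\mathcal{F})^{*}$. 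You instead identify both quotients with $\{\pm 1\}^{2}/\mathrm{sgn}(\mbox{units})$ via the sign map, which makes well-definedness, surjectivity, and the injectivity criterion $\mathrm{sgn}(O^{*})=\mathrm{sgn}(O_{K}^{*})$ completely transparent, and you replace the appeal to Neukirch by a hands-on computation: the image of $\varepsilon$ in $(O_{K}/2O_{K})^{*}$ has odd order $m$, so $\varepsilon^{m}\equiv 1 \pmod{2O_{K}}$ lies in $\Z+2O_{K}=O$, and an odd power preserves the sign vector. The paper's route buys brevity given the standard reference; yours buys a self-contained elementary argument whose only input is the splitting behavior of $2$, and it incidentally exhibits both groups explicitly as quotients of $\{\pm 1\}^{2}$. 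One small point worth a line in a final write-up: from $\varepsilon^{m}\equiv 1\pmod{2O_{K}}$ you should also observe $\varepsilon^{-m}\equiv 1\pmod{2O_{K}}$ (immediate, since $\varepsilon^{-m}-1=-\varepsilon^{-m}(\varepsilon^{m}-1)$), so that $\varepsilon^{m}$ is a unit \emph{of} $O$ and not merely an element of $O$; this is the step that licenses the assertion $\varepsilon^{m}\in O^{*}$.
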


\begin{proof}
Let $\widetilde{O}$ be an order in $K$ and let $\epsilon_{\widetilde{O}}$ be its fundamental unit. The group $P(\widetilde{O})/P^{+}(\widetilde{O})$ is either trivial or isomorphic to the group of order $2$, and the former happens if and only if the norm of the fundamental unit $N(\epsilon_{\widetilde{O}})$ is negative. Since $\alpha$ takes the class of $\epsilon_{O}$ to the class of $\epsilon^{m}_{O_{K}}$, where $\displaystyle m:=\#(O^{*}_{K}/O^{*})$, it follows from the initial observation that it is enough to show that $m$ is odd. Let $\mathcal{F}:=\{\gamma \in O_{K}: \gamma O_{K} \subseteq O\} $ i.e, the conductor of $O$, which in this case is $2O_{K}$. Since $2$ is unramified in $K$ the group $(O_{K}/\mathcal{F})^{*}$ is either trivial of order $3$, hence the same is true for its quotient group $\displaystyle (O_{K}/\mathcal{F})^{*}/(O/\mathcal{F})^{*}$. Since the group $(O^{*}_{K}/O^{*})$ can be injected in the group $\displaystyle (O_{K}/\mathcal{F})^{*}/(O/\mathcal{F})^{*}$, see for instance \cite[I, \S12 Theorem 12]{neu}, we have that $m \mid 3$ and in particular $m$ is odd.
\end{proof}

Given $K$ a number field and $O$ an order in it we denote by $Pic(O)$ (resp. $Pic^{+}(O)$) the Picard group of $O$ (resp. the narrow Picard group of $O$). If $O$ is the maximal order of $K$ we denote $Pic(O)$ (Resp. $Pic^{+}(O)$) by $C\ell(K)$ (Resp. $C\ell^{+}(K)$.)

\begin{proposition}\label{D2}
Let $K$ be a real quadratic number field of odd discriminant and let $O$ be the order of conductor $2$ in $O_{K}$. Then $Pic(O)[2^{\infty}]\cong C\ell(K)[2^{\infty}]$ if and only if $Pic^{+}(O)[2^{\infty}] \cong C\ell^{+}(K)[2^{\infty}]$. In particular, \[{\rm rk}_{4}(Pic^{+}(O))={\rm rk}_{4}(C\ell^{+}(K))\] whenever $Pic(O)[2^{\infty}]\cong C\ell(K)[2^{\infty}]$ .

\end{proposition}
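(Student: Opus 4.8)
The plan is to deduce everything from a single commutative diagram of exact sequences together with the isomorphism $\alpha$ supplied by Lemma \ref{D}. First I would write down the two standard sequences comparing narrow and wide Picard groups, one for the order $O$ and one for the maximal order $O_{K}$:
\[1 \to P(O)/P^{+}(O) \to Pic^{+}(O) \to Pic(O) \to 1,\]
\[1 \to P(O_{K})/P^{+}(O_{K}) \to C\ell^{+}(K) \to C\ell(K) \to 1.\]
In each sequence the left-hand term is the subgroup of classes represented by principal ideals, i.e. the kernel of the map that forgets total positivity. These fit into a commutative diagram whose vertical arrows are all induced by $I \mapsto IO_{K}$: on the left this is exactly the map $\alpha$ of Lemma \ref{D}, and in the middle and on the right it is the natural map $Pic^{+}(O) \to C\ell^{+}(K)$, respectively $Pic(O) \to C\ell(K)$. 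Commutativity of both squares is immediate from the description of the arrows.

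Next I would apply the snake lemma to this diagram. By Lemma \ref{D} the left vertical arrow $\alpha$ is an isomorphism, so $\ker\alpha = \mathrm{coker}\,\alpha = 0$, and the wide map $Pic(O) \to C\ell(K)$ is surjective, so its cokernel vanishes as well. The snake exact sequence then collapses to an isomorphism
\[\ker\bigl(Pic^{+}(O) \to C\ell^{+}(K)\bigr) \cong \ker\bigl(Pic(O) \to C\ell(K)\bigr),\]
and forces the cokernel of the middle arrow to vanish, so that $Pic^{+}(O) \to C\ell^{+}(K)$ is surjective too.

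The remaining step is purely group-theoretic. For a surjection of finite abelian groups $A \twoheadrightarrow B$ with kernel $N$ one has $B[2^{\infty}] \cong A[2^{\infty}]$ if and only if $N$ has odd order, because the $2$-primary component is a direct factor and $B[2^{\infty}] \cong A[2^{\infty}]/N[2^{\infty}]$. Applying this to the two surjections above, the condition $Pic(O)[2^{\infty}] \cong C\ell(K)[2^{\infty}]$ is equivalent to $\ker(Pic(O)\to C\ell(K))$ having odd order, and the condition $Pic^{+}(O)[2^{\infty}] \cong C\ell^{+}(K)[2^{\infty}]$ is equivalent to $\ker(Pic^{+}(O)\to C\ell^{+}(K))$ having odd order. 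Since the two kernels are isomorphic, these conditions are equivalent, which is the asserted ``if and only if''. When they hold, the natural map restricts to an isomorphism $Pic^{+}(O)[2^{\infty}] \cong C\ell^{+}(K)[2^{\infty}]$, and since the $4$-rank of a finite abelian group is an invariant of its $2$-primary part, the equality ${\rm rk}_{4}(Pic^{+}(O)) = {\rm rk}_{4}(C\ell^{+}(K))$ follows.

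The one genuinely delicate point is the careful setup of the diagram: checking that both narrow-to-wide sequences are exact with the indicated kernels and that $\alpha$ really is the induced left vertical arrow making the left square commute. Once this is in place the argument is formal, and no arithmetic input beyond Lemma \ref{D} is required. I would remark in passing that the computation in Lemma \ref{D} in fact shows the common kernel has order dividing $3$, so under the stated hypotheses both isomorphism conditions always hold; but the proof of Proposition \ref{D2} as stated needs only that $\alpha$ is an isomorphism.
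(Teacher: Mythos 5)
Your proof is correct and follows essentially the same route as the paper: the same commutative diagram of narrow-versus-wide exact sequences for $O$ and $O_{K}$, the snake lemma combined with Lemma \ref{D} to get $\ker\beta \cong \ker\gamma$ and surjectivity of the middle map, and the standard splitting of finite abelian groups into primary components to conclude. Your closing remark that the common kernel has odd order (dividing $3$), so that both isomorphism conditions in fact always hold here, is a correct observation that the paper leaves implicit.
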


\begin{proof}
Consider the following commutative diagram: (here $\beta$ and $\gamma$ are defined similarly to $\alpha$ in Lemma \ref{D} ) 

\[\xymatrix{1 \ar[r] & P(O)/P^{+}(O) \ar[d]^{\alpha} \ar[r] & Pic^{+}(O) \ar[d]^{\beta} \ar[r] & Pic(O) \ar[d]^{\gamma} \ar[r] & 1 \\
1 \ar[r] & P(O_{K})/P^{+}(O_{K}) \ar[r] & C\ell^{+}(K) \ar[r] & C\ell(K)  \ar[r] & 1}\]
 It follows from the Snake lemma and from Lemma \ref{D} that $\ker(\beta) \cong \ker(\gamma)$ and that ${\rm Coker}(\beta) \cong {\rm Coker}(\gamma)$. On the other hand since $\gamma$ is surjective (see\cite[I, \S12 Proposition 9]{neu}), $\beta$ is surjective as well. Since all the groups involved are finite abelian we have that \[ \gamma(Pic(O)[2^{\infty}]) = C\ell(K)[2^{\infty}] \ {\rm and} \ \beta(Pic^{+}(O)[2^{\infty}]) = C\ell^{+}(K)[2^{\infty}].\] The first claim follows from the above equalities and from $\ker(\beta) \cong \ker(\gamma)$. Since $\displaystyle {\rm rk}_{4}(Pic^{+}(O))={\rm rk}_{4}(C\ell^{+}(K))$ whenever $\displaystyle Pic^{+}(O)[2^{\infty}] \cong C\ell^{+}(K)[2^{\infty}]$ the second claim follows from the first.
\end{proof}

\begin{lemma}\label{3mod4disc}
Let $d$ be a square free integer and let $D=-4d$. If $d \not \equiv 3 \pmod{4}$ then $D$ is a fundamental discriminant, and every fundamental discriminant $D \equiv 0 \bmod 4$ is of this form. If $d  \equiv 3 \pmod{4}$ then $-d$ fundamental and ${\rm rk}_{4}(C_{D})={\rm rk}_{4}(C_{-d})$. 
\end{lemma}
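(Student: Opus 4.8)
The statement splits into an elementary part about fundamental discriminants and a genuinely arithmetic part about $4$-ranks, and the plan is to treat these separately.

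First I would dispose of the fundamental-discriminant assertions by pure congruence bookkeeping, using the classification of fundamental discriminants as those integers $m$ which are squarefree and $\equiv 1 \pmod 4$, or equal to $4m_{0}$ with $m_{0}$ squarefree and $m_{0}\equiv 2,3 \pmod 4$. Since $d$ is squarefree we have $d\equiv 1,2,3 \pmod 4$. If $d\equiv 1 \pmod 4$ (resp. $d\equiv 2 \pmod 4$) then $-d$ is squarefree with $-d\equiv 3 \pmod 4$ (resp. $-d\equiv 2 \pmod 4$), so $D=4(-d)$ is fundamental; and conversely a fundamental $D\equiv 0 \pmod 4$ equals $4m_{0}$ with $m_{0}\equiv 2,3 \pmod 4$ squarefree, whence $d:=-m_{0}$ is squarefree, $d\not\equiv 3 \pmod 4$, and $D=-4d$. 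If instead $d\equiv 3 \pmod 4$ then $-d\equiv 1 \pmod 4$ is squarefree, hence fundamental. This settles everything except the $4$-rank identity.

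For the identity ${\rm rk}_{4}(C_{D})={\rm rk}_{4}(C_{-d})$ I would set $K:=\Q(\sqrt{-d})$ and write $D=-4d=2^{2}(-d)$, so that $C_{D}={\rm Pic}^{+}(O)$ for $O=\Z+2O_{K}$ the order of conductor $2$, while $C_{-d}=C\ell^{+}(K)$. The decisive observation is that $-d$ is odd, so $2$ is unramified in $K$. The order $O$ has conductor $2O_{K}$, and since $(O/2O_{K})^{*}$ is trivial, the standard exact sequence for orders (Neukirch, \cite[I, \S12]{neu}) shows that the natural surjection ${\rm Pic}(O)\twoheadrightarrow C\ell(K)$ has kernel a quotient of $(O_{K}/2O_{K})^{*}$. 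As $2$ is unramified this last group has order $1$ or $3$ (according as $2$ splits or is inert), so the kernel has odd order; taking $2$-primary parts, which is exact on finite abelian groups, then gives ${\rm Pic}(O)[2^{\infty}]\cong C\ell(K)[2^{\infty}]$.

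It remains to pass to the narrow groups, and here I would split on the sign of $d$. If $d>0$ the field $K$ is imaginary, there are no real places, so the narrow and ordinary groups coincide; thus ${\rm Pic}^{+}(O)[2^{\infty}]\cong C\ell^{+}(K)[2^{\infty}]$, and since the $4$-rank depends only on the $2$-primary part this already gives ${\rm rk}_{4}(C_{D})={\rm rk}_{4}(C_{-d})$. If $d<0$ then $K$ is real quadratic of odd discriminant $-d$, and the isomorphism ${\rm Pic}(O)[2^{\infty}]\cong C\ell(K)[2^{\infty}]$ established above is exactly the hypothesis of Proposition \ref{D2}; invoking it yields ${\rm rk}_{4}({\rm Pic}^{+}(O))={\rm rk}_{4}(C\ell^{+}(K))$, which is again the claim. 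The only real work is the real-quadratic case, where the narrow and ordinary groups genuinely differ; but that discrepancy is precisely what Lemma \ref{D} and Proposition \ref{D2} were engineered to control, so the main obstacle reduces to verifying their hypothesis, which amounts to nothing more than the unramifiedness of $2$ in $K$.
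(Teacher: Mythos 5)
Your proof is correct and takes essentially the same route as the paper: the fundamental-discriminant claims are elementary, and for $d\equiv 3 \pmod 4$ both arguments reduce via Proposition \ref{D2} to the isomorphism ${\rm Pic}(O)[2^{\infty}]\cong C\ell(K)[2^{\infty}]$ for the order $O$ of conductor $2$, obtained from Neukirch's exact sequence because $2$ is unramified in $K$ and hence the kernel of ${\rm Pic}(O)\twoheadrightarrow C\ell(K)$ has odd order (dividing $3$). Your explicit split on the sign of $d$ is a small improvement in rigor, since Proposition \ref{D2} is stated only for real quadratic fields and the paper leaves the (trivial) imaginary case, where narrow and ordinary class groups coincide, implicit.
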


\begin{proof}
The first assertion is clear so we may assume that $d \equiv 3 \pmod{4}$ so that $C_{-d}$ is the narrow class group of $K=\Q(\sqrt{-d})$, and $-d={\rm Disc}(K)$ is the discriminant of $K$. Since $D=2^2{\rm Disc}(K)$, $D$ is the discriminant of the order of conductor $2$ in $K.$ Thanks to Proposition \ref{D2} it is enough to show that \[Cl_{D}[2^{\infty}] \cong Cl_{-d}[2^{\infty}]\] (where $Cl_{\delta}$ denotes the ideal class group of the order of discriminant $\delta$). From \cite[I, \S12 Theorem 12]{neu} we see that $\#Cl_{D}=3^{\epsilon}\#Cl_{-d}$ where $\epsilon \in \{0,1\}$. Thus $Cl_{D}[2^{\infty}] \cong Cl_{-d}[2^{\infty}]$. \end{proof}

For $i=1,2,3$ we define the quantities $T^{\pm}_{i}(X)$ by restricting the count $T^{\pm}(X)$ to only square free integers $d \equiv i \pmod{4}.$ 

\begin{proposition}\label{asymp} Suppose that in the following $D$ is reserved for fundamental discriminants. Then,
\begin{align*}
T^{+}_{1}(X) &= \left | \{0< -D < 4X,\  D \equiv 12 \bmod{16}, \ {\rm rk}_{4}(C_{D})=0 \} \right | \\
T^{-}_{1}(X) &= \left | \{0< \ D \ < 4X,\  D \equiv 12 \bmod{16}, \ {\rm rk}_{4}(C_{D})=0 \} \right | \\
T^{+}_{2}(X) &= \left | \{0< -D < 4X,\  D \equiv 8 \bmod{16}, \ {\rm rk}_{4}(C_{D})=0 \} \right | \\
T^{-}_{2}(X) &=  \left | \{0< \ D \ < 4X,\  D \equiv 8 \bmod{16}, \ {\rm rk}_{4}(C_{D})=0 \} \right | \\
T^{+}_{3}(X) &= \left | \{0< -D < X,\  D \equiv 1 \bmod{4}, \ {\rm rk}_{4}(C_{D})=0 \} \right | \\
T^{-}_{3}(X) &= \left | \{0< \ D \ < X,\  D \equiv 1 \bmod{4}, \ {\rm rk}_{4}(C_{D})=0 \} \right |.
\end{align*}Furthermore,
\begin{align*}
T^{+}_{1}(X)&=\frac{2\phi(1/2)}{\pi^2}X +o(X), T^{-}_{1}(X)=\frac{4\phi(1/2)}{\pi^2}X +o(X)\\ T^{+}_{2}(X)&=\frac{2\phi(1/2)}{\pi^2}X +o(X), T^{-}_{2}(X)=\frac{4\phi(1/2)}{\pi^2}X +o(X)\\ T^{+}_{3}(X)&=\frac{2\phi(1/2)}{\pi^2}X +o(X), T^{-}_{3}(X)=\frac{4\phi(1/2)}{\pi^2}X +o(X),  
\end{align*} and
 \[T^{+}(X)=\frac{6\phi(1/2)}{\pi^2}X+o(X), \ T^{-}(X)=\frac{12\phi(1/2)}{\pi^2}X+o(X). \]
\end{proposition}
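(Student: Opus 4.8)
The plan is to translate each count $T_i^{\pm}(X)$ directly into one of the six Fouvry--Kl\"uners counts of Theorem \ref{FK} and then read off the asymptotics. The starting point is Proposition \ref{4ranktheorem}, which tells us that the conclusion of Theorem \ref{principal} holds for $K_d$ precisely when $\mathrm{rk}_4(C_{-4d})=0$; thus $T^{\pm}(X)$ counts square-free $d$ with $0<\pm d<X$ and $\mathrm{rk}_4(C_{-4d})=0$. Since a square-free integer is never divisible by $4$, the three residue classes $d\equiv 1,2,3\pmod 4$ partition all admissible $d$, so $T^{\pm}=T_1^{\pm}+T_2^{\pm}+T_3^{\pm}$, and it suffices to identify each piece separately.

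First I would dispose of the classes $d\equiv 1,2\pmod 4$. Here $d\not\equiv 3\pmod 4$, so by Lemma \ref{3mod4disc} the assignment $d\mapsto D:=-4d$ is a bijection onto the fundamental discriminants $D\equiv 0\pmod 4$. A direct congruence computation refines this: when $d\equiv 1\pmod 4$ (so $d$ is odd) one has $-4d\equiv 12\pmod{16}$, and when $d\equiv 2\pmod 4$ (so $d=2\times\text{odd}$) one has $-4d\equiv 8\pmod{16}$, and each of these classes modulo $16$ is attained bijectively. Under this correspondence the $4$-rank condition is preserved tautologically (the group $C_{-4d}=C_D$ is unchanged), the sign of $D$ is opposite to that of $d$, and the bound $|d|<X$ becomes $|D|<4X$. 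This yields exactly the six displayed identities for $T_1^{\pm}$ and $T_2^{\pm}$, with the range $4X$ in place of $X$.

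Next comes $d\equiv 3\pmod 4$, the one case where $-4d$ is not fundamental. The key input is the second half of Lemma \ref{3mod4disc}, which gives $\mathrm{rk}_4(C_{-4d})=\mathrm{rk}_4(C_{-d})$ with $-d$ a fundamental discriminant congruent to $1\pmod 4$. Setting $D:=-d$, the condition $\mathrm{rk}_4(C_{-4d})=0$ becomes $\mathrm{rk}_4(C_D)=0$, the sign of $D$ is again opposite to that of $d$, and now $|d|<X$ reads $|D|<X$ with no rescaling. This produces the identities for $T_3^{\pm}$ against the $D\equiv 1\pmod 4$ counts of Theorem \ref{FK}.

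Finally I would substitute the asymptotics. For $T_1^{\pm}$ and $T_2^{\pm}$ one feeds $4X$ into the relevant lines of Theorem \ref{FK}: since each such line has main term a constant times $\frac{1}{2\pi^2}X$, replacing $X$ by $4X$ turns this into that constant times $\frac{2}{\pi^2}X$, giving $\frac{2\phi(1/2)}{\pi^2}X$ for the $T^{+}$ pieces and $\frac{4\phi(1/2)}{\pi^2}X$ for the $T^{-}$ pieces, with the rescaled error absorbed into $o(X)$. For $T_3^{\pm}$ the $D\equiv 1\pmod 4$ lines already carry the coefficient $\frac{2}{\pi^2}$ and need no rescaling, yielding the same two values. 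Summing the three pieces gives $T^{\pm}(X)$ as stated. The only real work is bookkeeping: pinning down the congruences modulo $16$, checking that the bijections are exact rather than merely correct up to $O(1)$, and correctly propagating the factor $4$ from the range into the main term. I expect this mod-$16$ and rescaling accounting to be the main obstacle, since an error there would corrupt the final proportions.
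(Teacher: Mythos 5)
Your proposal is correct and follows the same route as the paper: Proposition \ref{4ranktheorem} plus Lemma \ref{3mod4disc} give the six counting identities (with the mod-$16$ bookkeeping and the $X \mapsto 4X$ rescaling you spell out, which the paper leaves implicit), Theorem \ref{FK} then yields the asymptotics, and summing $T^{\pm}=T^{\pm}_{1}+T^{\pm}_{2}+T^{\pm}_{3}$ finishes the argument. Your verification of the bijections $d\mapsto -4d$ (for $d\equiv 1,2 \pmod 4$) and $d\mapsto -d$ (for $d\equiv 3 \pmod 4$) onto the respective classes of fundamental discriminants is exactly the content the paper's one-line proof compresses.
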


\begin{proof}
The first part of the proposition follows from Proposition \ref{4ranktheorem} and Lemma \ref{3mod4disc}, and the second is obtained thanks to Theorem \ref{FK}. Since $T^{\pm}(X)=T^{\pm}_{1}(X)+T^{\pm}_{2}(X)+T^{\pm}_{3}(X)$ the last part of the proposition follows.
\end{proof}
Now we are ready to prove Theorem \ref{teoremacaso2}
\begin{proof}
Since $N^{+}(X)=\frac{6}{\pi^2}X+o(X), N^{-}(X)=\frac{6}{\pi^2}X+o(X)$ and $N(X)=\frac{12}{\pi^2}X+o(X)$ Propositions \ref{asymp} implies that
\begin{align*}
\frac{T^{+}(X)}{N^{+}(X)} &= \frac{\frac{6\phi(1/2)}{\pi^2}X+o(X)}{\frac{6}{\pi^2}X+o(X)}\\
\frac{T^{-}(X)}{N^{-}(X)} &= \frac{\frac{12\phi(1/2)}{\pi^2}X+o(X)}{\frac{6}{\pi^2}X+o(X)} \\
\frac{T(X)}{N(X)} &= \frac{\frac{18\phi(1/2)}{\pi^2}X+o(X)}{\frac{12}{\pi^2}X+o(X)},
\end{align*} from which the theorem follows.
\end{proof}

\section*{Acknowledgements}

In the first place I would like to thank the referee for the careful reading of the paper, and specially for point it out the existence of Theorem \ref{estespall}. I would like to thank Eva Bayer-Fluckiger for many helpful conversations around the topic of this paper, and for her thorough and valuable comments on an earlier draft of it. I also thank to Lisa (Powers) Larsson for her helpful comments on one of the first drafts of the paper.

\noindent
Guillermo Mantilla-Soler\\
Departamento de Matem\'aticas,\\
Universidad de los Andes, \\
Carrera 1 N. 18A - 10, Bogot\'a, \\
Colombia.\\
g.mantilla691@uniandes.edu.co

\end{document}